\newcommand{\mycolor}{Navy}
\newtheorem{The}{Theorem}
\newtheorem{Lem}[The]{Lemma}
\newtheorem{Cor}[The]{Corollary}
\newtheorem{Prop}[The]{Proposition}
\newtheorem{Def}[The]{Definition}
\newcommand{\C}{\mathbb{C}}
\newcommand{\R}{\mathbb{R}}
\newcommand{\dt}{\partial_t}
\newcommand{\e}{\epsilon}
\newcommand{\p}{\psi}
\begin{document}
%tieu de va ten tg%
 \title[Parabolic complex Hessian type equations]
{Viscosity solutions to  Parabolic complex Hessian type equations} 
\setcounter{tocdepth}{1}
\author{Hoang-Son Do} 
\address{Institute of Mathematics \\ Vietnam Academy of Science and Technology \\18
	Hoang Quoc Viet \\Hanoi \\Vietnam}
\email{hoangson.do.vn@gmail.com, dhson@math.ac.vn}
\date{\today\\ {\it Keywords:} Viscosity solutions, the Cauchy-Dirichlet problem, symmetric functions of eigenvalues	of the complex Hessian, $\Gamma$-subharmonic function.\\
	 The author was supported by Vietnam Academy of Science and Technology under grant number CT0000.07/21-22.}
\maketitle
\begin{abstract}
	In this paper, we show the existence and uniqueness of viscosity solution to the Cauchy-Dirichlet problem for a class of fully nonlinear parabolic equations.  This extends recent results of Eyssidieux-Guedj-Zeriahi in \cite{EGZ15b}.
\end{abstract}
\tableofcontents
\section{Introduction}
	Let $\Gamma\varsubsetneq\R^n$ be an open, convex, symmetric cone with vertex at $0$ such that $\Gamma_n\subseteq\Gamma\subseteq\Gamma_1$,
where $\Gamma_k$ is the set of all $x\in\R^n$ such that the $l$-th elementary symmetric sum $\sigma_l(x)>0$  for every $1\leq l\leq k$.
Let $f:\overline{\Gamma}\rightarrow [0, \infty)$
be a symmetric, concave function such that
$f$ is  strictly increasing in each variable and $f|_{\partial\Gamma}=0$ (and then $f|_{\Gamma}>0$).
We define $F:\mathcal{H}^n\rightarrow [-\infty, \infty)$ by
\begin{equation}
	F(H)=\begin{cases}
		f(\lambda(H))\quad\mbox{if}\quad H\in \overline{M(\Gamma, n)},\\
		-\infty\quad\mbox{if}\quad H\in \mathcal{H}^n\setminus\overline{M(\Gamma, n)}.
	\end{cases}
\end{equation}
where $\mathcal{H}^n$ is the set of all $n\times n$ Hermitian matrices and
$M(\Gamma, n)$ is the subset of $\mathcal{H}^n$ containing
matrices $H$ with the eigenvalues $\lambda (H)=(\lambda_1,...,\lambda_n)\in\Gamma$.
 The conditions on $f$ imply that $F$ is concave
on $\overline{M(\Gamma, n)}$ (see \cite{CNS}) and
\begin{equation}\label{F is increasing}
	F(M+N)>F(M),
\end{equation}
for every $M\in M(\Gamma, n)$ and for each positive semidefinite  matrix $N\neq 0$.\\

 Let $\Omega\subset \mathbb{C}^n$ be a bounded domain and $T>0$. We consider the Cauchy-Dirichlet
 problem
\begin{equation}\label{PHE dirichlet}
	\begin{cases}
		F(Hu)=e^{\dt u+G(t, z, u)}g(z)\qquad\mbox{ in }\qquad \Omega_T,\\
		u=\varphi\qquad\mbox{ in }\qquad [0, T)\times\partial\Omega,\\
		u(0, z)=u_0(z)\qquad\mbox{ in }\quad\overline{\Omega},
	\end{cases}
\end{equation}
where 
\begin{itemize}
	\item  $\Omega_T=(0, T)\times\Omega$.
	\item $Hu$ is the complex Hessian of $u$.
	\item $G(t, z, r)$ is a continuous function in $[0, T]\times\overline{\Omega}\times\mathbb{R}$ 
	which is non-decreasing in the last variable.
	\item  $g\geq 0$ is a continuous, bounded function in  $\Omega$.
	\item $\varphi(t, z)$ is continuous in $[0, T]\times\partial\Omega$.
	\item $u_0(z)$ is continuous in $\overline{\Omega}$ and $\Gamma$-subharmonic in $\Omega$ with $u_0(z)=\varphi(0, z)$ for every $z\in\partial\Omega$.
\end{itemize}
In the case of Parabolic complex Monge-Amp\`ere equation (i.e., $f(x)=(\sigma_n(x))^{1/n}$ and $\Gamma=\Gamma_n$), Eyssidieux-Guedj-Zeriahi \cite{EGZ15b} show that the Cauchy-Dirichlet problem has a unique
viscosity solution provided $\Omega$ is strictly pseudoconvex and $(u_0, g)$ is admissible. In this note, we verify that the same result holds in the general case.

We say that the pair $(u_0, g)$ is admissible if for all $\epsilon>0$, there
exist $u_{\epsilon}\in C(\bar{\Omega})$ and $C_{\epsilon}>0$ such that $u_0\leq u_{\epsilon}\leq u_0+\epsilon$ and $F(H u_{\epsilon})\leq 
e^{C_{\epsilon}}g(z)$ in the viscosity sense. Our main result is as follows:
\begin{The}
	Assume $\Omega$ is a strictly $\Gamma$-pseudoconvex domain and $f$ satisfies
	\begin{center}
		$\lim\limits_{R\to\infty}f(R,...,R)=\infty.$
	\end{center}
	Then, the Cauchy-Dirichlet problem \eqref{PHE dirichlet} admits a unique viscosity solution iff $(u_0, g)$ is admissible. Moreover, if $u$ is a viscosity (sub-)solution of \eqref{PHE dirichlet} then $z\longmapsto u(t, z)$ is $\Gamma$-subharmonic in $\Omega$ for every $t\in (0, T)$.
\end{The}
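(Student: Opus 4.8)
The plan is to follow the scheme of Eyssidieux--Guedj--Zeriahi \cite{EGZ15b}: the backbone is a \emph{comparison principle}, which yields uniqueness at once and the rigidity needed for Perron's method, while explicit barriers built from the strict $\Gamma$-pseudoconvexity of $\Omega$ and the growth of $f$ produce a solution, the admissibility of $(u_0,g)$ being exactly what makes the initial condition attainable. So the first (and hardest) step is: if $u$ is a bounded u.s.c.\ viscosity subsolution and $v$ a bounded l.s.c.\ viscosity supersolution of \eqref{PHE dirichlet} with $\limsup u\le\liminf v$ on the parabolic boundary $(\{0\}\times\overline{\Omega})\cup([0,T)\times\partial\Omega)$, then $u\le v$ on $\Omega_T$. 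I would run the usual doubling-of-variables argument with the parabolic Jensen--Ishii lemma; the structural inputs are that $F$ is degenerate elliptic and concave on $\overline{M(\Gamma,n)}$ and that $r\mapsto e^{\dt r+G(t,z,r)}g(z)$ is non-decreasing (since $G$ is). Two nonstandard points need care: one reduces to a \emph{strict} subsolution by replacing $u$ with $u-\delta t$ (or $u-\delta/(T-t)$) and shrinking the right-hand side; and on $\{g=0\}$, where the right-hand side degenerates, I would compare with the equation for $g+\delta$ and let $\delta\to0$, using $F\ge0$ and $\lim_{R\to\infty}f(R,\dots,R)=\infty$ to keep the perturbed objects sub-/supersolutions. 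Uniqueness in the Theorem is then immediate.

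For existence, assume $(u_0,g)$ admissible. Let $\rho$ be a smooth defining function of $\Omega$ that is strictly $\Gamma$-subharmonic (available from strict $\Gamma$-pseudoconvexity), and let $\omega_\varphi,\omega_{u_0}$ be moduli of continuity of the data. From $\rho$, the function $|z|^2$, and $\Gamma$-subharmonic envelopes of the data one builds a continuous supersolution $\overline{u}$ and a continuous subsolution $\underline{u}$ on $\overline{\Omega_T}$ agreeing with $\varphi$ on $[0,T)\times\partial\Omega$, with $\overline{u}(0,\cdot)\ge u_0\ge\underline{u}(0,\cdot)$; $\underline{u}$ can be taken of the form $u_0+\rho+B|z|^2+(\text{lateral correction})-\tilde\omega(t)$ with $B\gg1$ and $\tilde\omega$ a $C^1$ concave majorant of $\omega_\varphi$, so that $F(H\underline{u})$ is forced to be arbitrarily large (here $\Gamma_n\subseteq\Gamma$, the ellipticity of the linearization of $F$, and $\lim_{R\to\infty}f(R,\dots,R)=\infty$ are used) while $\dt\underline{u}=-\tilde\omega'(t)\le0$ keeps the right-hand side bounded; $\overline{u}$ may be chosen with $\lambda(H_z\overline{u})\notin\overline{\Gamma}$ inside, so that $F(H_z\overline{u})=-\infty$ and the supersolution inequality there is vacuous. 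The remaining barrier is an \emph{upper} one at $t=0$, and this is where admissibility enters: given $\e>0$, pick $u_\e\in C(\overline{\Omega})$ with $u_0\le u_\e\le u_0+\e$ and $F(Hu_\e)\le e^{C_\e}g$ in the viscosity sense, and set $\overline{u}_\e(t,z):=u_\e(z)+M_\e t$. A short test-function check shows $\overline{u}_\e$ is a supersolution once $M_\e$ is large: if $\psi$ touches $\overline{u}_\e$ from below at $(t_0,z_0)$ then $z\mapsto\psi(t_0,z)-M_\e t_0$ touches $u_\e$ from below at $z_0$, whence $F(H_z\psi(t_0,z_0))\le e^{C_\e}g(z_0)$, while the touching forces $\dt\psi(t_0,z_0)\ge M_\e$, so $F(H_z\psi)\le e^{\dt\psi+G}g$ provided $M_\e\ge C_\e-\inf G$.

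Perron's method now applies: set $u:=\big(\sup\{w:w\text{ a subsolution},\ \underline{u}\le w\le\overline{u}\wedge\overline{u}_\e,\ w\le\varphi\text{ on }[0,T)\times\partial\Omega,\ w(0,\cdot)\le u_0\}\big)^{*}$. Then $u$ is a subsolution (sup of subsolutions), a supersolution (the usual bump argument against maximality), and, squeezed between the barriers, it attains the lateral data while $\underline{u}(0,\cdot)=u_0\le u(0,\cdot)\le u_\e\le u_0+\e$; letting $\e\to0$ gives $u(0,\cdot)=u_0$. The comparison principle applied to $u^{*}$ and $u_{*}$ forces $u^{*}=u_{*}$, so $u$ is continuous and is the unique viscosity solution. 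Conversely, if \eqref{PHE dirichlet} has a viscosity solution, then, following \cite{EGZ15b}, one produces from it (for each $\e$, using uniform continuity near $t=0$) a continuous $u_\e$ with $u_0\le u_\e\le u_0+\e$ that is a viscosity supersolution of $F(Hv)=e^{C_\e}g$, so $(u_0,g)$ is admissible. Finally, for the last assertion let $u$ be any subsolution, fix $t_0\in(0,T)$, and let $\phi\in C^2$ touch $z\mapsto u(t_0,z)$ from above at $z_0$; after adding $\delta|z-z_0|^2$ the touching is strict in $z$. With $\Psi_K(t,z):=\phi(z)+\delta|z-z_0|^2+K(t-t_0)^2$, the supremum of $u-\Psi_K$ over the past parabolic neighbourhood of $(t_0,z_0)$ is attained at $(t_K,z_K)$ with $t_K\le t_0$ and $(t_K,z_K)\to(t_0,z_0)$ as $K\to\infty$ (because $u(t,z)-u(t_0,z)\le\omega(t_0-t)$ for a modulus of continuity $\omega$ of $u$, and $\sup_{h\ge0}(\omega(h)-Kh^2)\to0$). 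The subsolution inequality at $(t_K,z_K)$ reads $F\big(H_z\Psi_K(t_K,z_K)\big)\ge e^{\dt\Psi_K(t_K,z_K)+G}g(z_K)\ge0$, forcing $\lambda\big(H_z\Psi_K(t_K,z_K)\big)\in\overline{\Gamma}$; letting $K\to\infty$ and then $\delta\to0$ yields $\lambda(H\phi(z_0))\in\overline{\Gamma}$, i.e.\ $u(t_0,\cdot)$ is $\Gamma$-subharmonic.

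The main obstacle is the comparison principle of the first step. Its difficulties all stem from the special features of \eqref{PHE dirichlet}: the right-hand side carries $\dt u$ inside an exponential, it vanishes on $\{g=0\}$, and $F$ equals $-\infty$ off $\overline{M(\Gamma,n)}$, so one must marry the parabolic Jensen--Ishii machinery with the perturbation arguments above while checking that every perturbed function remains a genuine sub- or supersolution; the concavity of $F$ and the hypothesis $\lim_{R\to\infty}f(R,\dots,R)=\infty$ are precisely the ingredients that legitimize these perturbations. Once comparison is available, the barrier constructions, Perron's method, the admissibility equivalence, and the $\Gamma$-subharmonicity of time-slices are comparatively routine adaptations of \cite{EGZ15b}.
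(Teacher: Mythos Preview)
Your overall architecture matches the paper's (comparison $\Rightarrow$ uniqueness; barriers $+$ Perron $\Rightarrow$ existence; admissibility $\Leftrightarrow$ solvability), but there is a genuine gap in the comparison step, and it is precisely the technical heart of the paper.

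The parabolic Jensen--Ishii maximum principle (\cite[Theorem~8.3]{CIL92}) requires the auxiliary condition~$(*)$: for $w=u$ and for $w=-v$, a bound on $|w|+|p|+|Q|$ must force a bound on $\tau$. For the equation $F(Hw)=e^{\dt w+G}g$ this fails in general. On the subsolution side, at points where $g(z_0)=0$ the viscosity inequality reduces to $F(HQ)\ge0$ and gives no upper bound on $\tau$; on the supersolution side, $F(HQ)\le e^{\tau+G}g$ gives no lower bound on $\tau$ whenever $HQ\notin M(\Gamma,n)$ (so that $F(HQ)\le0$). Your proposed fix of replacing $g$ by $g+\delta$ does not help: it preserves the supersolution property of $v$ (since $e^{\cdots}g\le e^{\cdots}(g+\delta)$) but \emph{destroys} the subsolution property of $u$, and in any case it does not address condition~$(*)$ for $-v$.

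The paper resolves this by first proving comparison only for $u,v$ that are \emph{locally Lipschitz in $t$} (then condition~$(*)$ is immediate), and then reducing the general case to this one via one-variable sup/inf-convolutions in $t$ alone:
\[
u^k(t,z)=\sup_{|s|\le A/k}\bigl(u(t+s,z)-k|s|\bigr),\qquad
v_k(t,z)=\inf_{|s|\le A/k}\bigl(v(t+s,z)+k|s|\bigr).
\]
These are $k$-Lipschitz in $t$ and remain sub/supersolutions of slightly perturbed equations; comparison for $u^k,v_k$ plus $k\to\infty$ yields comparison for $u,v$. This regularization step is the missing idea in your plan. Incidentally, the role of $\lim_{R\to\infty}f(R,\dots,R)=\infty$ in the paper is not a $g+\delta$ trick but the additive perturbation $u\mapsto u+\delta(|z|^2-C)$: one gets $HQ^+\ge\delta\cdot Id$, and then concavity of $F$ together with the growth condition give
\[
F(HQ^+)\ \ge\ \frac{R}{R+\delta}\,F(HQ^+-\delta\,Id)+\frac{\delta}{R+\delta}\,F(HQ^++R\,Id)\ \ge\ F(HQ^+-\delta\,Id)+\frac{\delta}{R+\delta},
\]
producing the strict gap needed for the contradiction.

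A smaller issue: in your $\Gamma$-subharmonicity argument you invoke a modulus of continuity of $u$ to force $(t_K,z_K)\to(t_0,z_0)$, but a subsolution is only upper semicontinuous, so no such modulus exists. The convergence $(t_K,z_K)\to(t_0,z_0)$ can still be obtained from upper semicontinuity alone (the value of $u-\Psi_K$ at the max point is $\ge0$, $K(t_K-t_0)^2$ stays bounded, and $\phi+\delta|z-z_0|^2$ strictly dominates $u(t_0,\cdot)$ away from $z_0$); also, you should maximize over a \emph{full} neighbourhood rather than only the past, so that $\Psi_K$ is a genuine upper test at $(t_K,z_K)$. The paper takes a different route here: it uses the full sup-convolution in $(t,z)$ to make the subsolution semi-convex, applies Alexandrov's theorem to get second-order differentiability a.e., slices at a.e.\ $t_0$, and then passes to all $t_0$ by stability. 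Once repaired, your direct touching argument is a legitimate and more elementary alternative.
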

The note is organized as follows: in the section \ref{sec pre}, we recall backgrounds on
viscosity sub/super-solutions, $\Gamma$-subharmonic functions and strictly $\Gamma$-pseudoconvex domains;
in the section \ref{sec reg}, we prove some lemmas about approximating a sub/super-solution
by a sequence of sub/super-solutions which are Lipschitz in $t$; the $\Gamma$-subharmonicity of subsolutions will be shown in Section \ref{sec subharmonic}; the comparison principle and the Perron method will be presented
in Section \ref{sec compa}; in the last section, we prove the existence and uniqueness of solution.
\section{Preliminaries}\label{sec pre}
In this section, we recall the definitions and some properties of viscosity sub/super-solutions
as well as $\Gamma$-subharmonicity. The reader can find more
details in \cite{Jen88}, \cite{Ish89}, \cite{IL90}, \cite{CIL92}, \cite{EGZ15b} and \cite{DDT}.
\subsection{Viscosity concepts}
\begin{Def}(Test functions) Let $w : \Omega_T \rightarrow \mathbb{R}$ be a function  and let $(t_0,z_0) \in \Omega_T$. An upper test function (resp., a lower test function) for  $w$ at $(t_0, z_0)$ is a function $q$ is $C^{(1, 2)}$-smooth (i.e. $q$ is $C^1$-smooth in $t$ and $C^2$-smooth in $z$) in a neighbourhood of $(t_0, z_0)$ such that $w(t_0, z_0)=q(t_0, z_0)$ and $\omega \leq q$ (resp., $w\geq q$) 
in a neighbourhood of $(t_0, z_0)$.
\end{Def}

\begin{Def}
	\begin{enumerate}[(i)]	
		\item  A function $u \in USC(\Omega_T)$ is said to be a (viscosity) subsolution of the parabolic equation
		\begin{equation}\label{PHE def}
				F(Hw)=e^{\dt w+G(t, z, w)}g(z),
		\end{equation}
		in $\Omega_T$ if for every $(t_0, z_0) \in \Omega_T$ and for each upper test function $q$ of $u$ at $(t_0, z_0) $, we have
		$$
		F(Hq(t_0, z))\mid_{z=z_0}  \geq e^{\partial_t q (t_0,z_0) + G(t_0,z_0,q (t_0,z_0))} g (z_0).
		$$
		In this case, we also say that
		$$F(Hu)  \geq e^{\partial_t u (t,z) + G(t,z,u (t,z))} g(z),$$ 
		in the viscosity sense in $\Omega_T$.
		
		A function $u \in USC(\left[ 0, T\right) \times \overline{\Omega} )$ said to be a subsolution of  the Cauchy - Dirichlet problem \eqref{PHE dirichlet} if $u$ is a subsolution of \eqref{PHE def} satisfying $u \leq \varphi$ in $\left[ 0, T\right) \times \partial \Omega$ and $u(0, z) \leq u_0(z)$ for every $z \in \Omega$.
		
		\item  A function $v \in LSC(\Omega_T)$ is said to be a (viscosity) supersolution of \eqref{PHE def} in $\Omega_T$ if for every $(t_0, z_0) \in \Omega_T$ and for each lower test function $q$ for $u$ at $(t_0, z_0) $, we have
		$$
			F(Hq(t_0, z))\mid_{z=z_0} \leq e^{\partial_{t} q (t_0,z_0) + G(t_0,z_0,q (t_0,z_0))} g (z_0).
		$$
		In this case, we also say that
		$$F(Hv) \leq e^{\partial_t v (t,z) + G(t,z,v(t,z))} g(z),$$
		in the viscosity sense in $\Omega_T$.
		
		A function $v \in LSC(\left[ 0, T\right) \times \overline{\Omega} )$ is said to be a (viscosity) supersolution of the Cauchy - Dirichlet problem \eqref{PHE dirichlet} if $v$ is  a  supersolution of \eqref{PHE def} satisfying $v \geq \varphi$ in $\left[ 0, T\right) \times \partial \Omega$ and $v(0, z) \geq u_0(z)$ for every $z \in \Omega$.
		
		\item  A function $u: \Omega_T \rightarrow \mathbb{R}$ is said to be a (viscosity) solution of \eqref{PHE def} (resp., \eqref{PHE dirichlet}) if it is a subsolution and a supersolution of \eqref{PHE def} (resp., \eqref{PHE dirichlet}).
	\end{enumerate}
\end{Def}
The following lemma is directly deduced from the definition of subsolution:
\begin{Lem}
	If $u_1, u_2$ are subsolutions of \eqref{PHE def} in $\Omega_T$ then $\max\{u_1, u_2\}$ is a
	subsolution of \eqref{PHE def} in $\Omega_T$.
\end{Lem}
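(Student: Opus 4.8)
The plan is to argue directly from the definition of viscosity subsolution, reducing everything to a pointwise statement about upper test functions. Fix $(t_0,z_0)\in\Omega_T$ and let $q$ be an upper test function for $w:=\max\{u_1,u_2\}$ at $(t_0,z_0)$. Since $w$ is a maximum of two $\mathrm{USC}$ functions it is $\mathrm{USC}$, so this is the correct class. The key observation is that $w(t_0,z_0)=u_j(t_0,z_0)$ for at least one $j\in\{1,2\}$; fix such a $j$. I claim $q$ is then automatically an upper test function for that same $u_j$ at $(t_0,z_0)$: indeed $u_j\leq w\leq q$ in a neighbourhood of $(t_0,z_0)$, and $u_j(t_0,z_0)=w(t_0,z_0)=q(t_0,z_0)$. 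Hence, since $u_j$ is a subsolution of \eqref{PHE def},
\[
F(Hq(t_0,z))\mid_{z=z_0}\ \geq\ e^{\partial_t q(t_0,z_0)+G(t_0,z_0,q(t_0,z_0))}\,g(z_0),
\]
which is exactly the inequality required for $w$ at $(t_0,z_0)$ with the test function $q$.

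Since $(t_0,z_0)$ and the upper test function $q$ were arbitrary, this shows $w=\max\{u_1,u_2\}$ satisfies the subsolution inequality at every point of $\Omega_T$, so $w$ is a subsolution of \eqref{PHE def} in $\Omega_T$. The only points deserving a word of care are: (a) checking that $w\in\mathrm{USC}(\Omega_T)$, which is the standard fact that a finite maximum of upper semicontinuous functions is upper semicontinuous; and (b) noting that the right-hand side of the subsolution inequality, evaluated at $(t_0,z_0)$ with the test function $q$, is literally the same expression whether one thinks of $q$ as a test function for $u_j$ or for $w$, because it depends only on $q(t_0,z_0)$, $\partial_t q(t_0,z_0)$ and $Hq(t_0,z_0)$ — all intrinsic to $q$ and the point — and $q(t_0,z_0)=w(t_0,z_0)=u_j(t_0,z_0)$.

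There is essentially no obstacle here; the lemma is a formal consequence of the locality of the viscosity notion and the fact that passing to a larger function only enlarges the family of upper test functions at a point where the value is attained. The one subtlety worth flagging explicitly in the write-up is that one must choose the index $j$ so that $u_j(t_0,z_0)=w(t_0,z_0)$: if $u_j(t_0,z_0)<w(t_0,z_0)$ then $q$ need not touch $u_j$ from above at $(t_0,z_0)$ and the argument would fail for that index, which is why the "touching index" must be selected pointwise.
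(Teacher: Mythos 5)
Your argument is correct and is exactly the reasoning the paper has in mind: the paper states this lemma without proof as ``directly deduced from the definition of subsolution,'' and your pointwise selection of the touching index $j$ with $u_j(t_0,z_0)=\max\{u_1,u_2\}(t_0,z_0)$, so that any upper test function for the maximum is an upper test function for $u_j$, is the standard way to make that deduction precise.
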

The following (classical but non-trivial) result is a useful tool in the viscosity theory (see \cite{CIL92, IS12}):
\begin{Lem}\label{lem inf sup}
	Let $(u_{\tau})$ be a family of real-valued functions in  $\Omega_T$. 
	
	1. Assume that $\tau$, $u_{\tau}$ is a subsolution of 
	\begin{equation}\label{eq Lem liminf limsup}
		F(Hw)=e^{ \partial_t w + F(t,z,w)} g(z) ,
	\end{equation}
	in  $\Omega_T$ for every $\tau$ and $\sup_{\tau}u_{\tau}$ is bounded from above.
	Then
	$\overline u= (\sup_{\tau} u_{\tau})^*$ is a subsolution of
	 \eqref{eq Lem liminf limsup}
	in $\Omega_T$. 
	
	2. Assume that $u_{\tau}$ is a supersolution of \eqref{eq Lem liminf limsup} in $\Omega_T$ for every $\tau$
	and $\inf_{\tau}u_{\tau}$ is bounded from below. Then
	$\underline u = (\inf_{\tau} u_{\tau})_*$
	is a supersolution of  \eqref{eq Lem liminf limsup} in $\Omega_T$.
	
	3. If $\tau\in \mathbb{N}$ then 1. and 2. hold for 	$\overline u= (\limsup\limits_{\tau\to\infty} u_{\tau})^*$ and $\underline u = (\liminf\limits_{\tau\to\infty} u_{\tau})_*$.
\end{Lem}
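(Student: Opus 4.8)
\textbf{Proof proposal for Lemma \ref{lem inf sup}.}

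The plan is to reduce everything to the standard half-relaxed-limits machinery of the user's guide (Crandall--Ishii--Lions) and to the parabolic jet formulation, then check that the structural features of our operator --- the possibility that $F$ takes the value $-\infty$ off $\overline{M(\Gamma,n)}$, and the exponential right-hand side --- cause no trouble. Since parts 1 and 2 are formally dual (replace $u$ by $-v$ and reverse inequalities, keeping in mind that $G$ is non-decreasing in $r$), and part 3 follows from part 1 applied to the finite sups $\sup_{k\le N}u_k$ together with the fact that $\limsup_k u_k = \lim_N (\sup_{k\ge N} u_k)$ is a decreasing limit, it suffices to prove part 1. So fix the family $(u_\tau)$ of subsolutions with $M:=\sup_\tau u_\tau$ bounded above, and let $\overline u = M^*$ be its upper semicontinuous envelope.

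The core step is the following: let $(t_0,z_0)\in\Omega_T$ and let $q$ be an upper test function for $\overline u$ at $(t_0,z_0)$, which after subtracting a suitable quadratic-in-$z$, linear-in-$t$ perturbation (and using that we may freely add $|z-z_0|^4 + (t-t_0)^2$ to strictify the maximum) we may assume gives a \emph{strict} global maximum of $\overline u - q$ at $(t_0,z_0)$ on a closed parabolic neighbourhood $\overline N$. By definition of the upper envelope there is a sequence of points $(s_j,w_j)\to(t_0,z_0)$ and indices $\tau_j$ with $u_{\tau_j}(s_j,w_j)\to\overline u(t_0,z_0)$. The standard argument: pick $(\hat t_j,\hat z_j)\in\overline N$ realizing $\max_{\overline N}(u_{\tau_j}-q)$; upper semicontinuity of $u_{\tau_j}$ guarantees the max is attained. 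One shows $(\hat t_j,\hat z_j)\to(t_0,z_0)$ and $u_{\tau_j}(\hat t_j,\hat z_j)\to\overline u(t_0,z_0)$ by a compactness-plus-strict-maximum argument: any subsequential limit $(\hat t,\hat z)$ of $(\hat t_j,\hat z_j)$ satisfies, using $\limsup u_{\tau_j}(\hat t_j,\hat z_j)\le \overline u(\hat t,\hat z)$ and the near-maximality coming from the test points $(s_j,w_j)$, the inequality $\overline u(\hat t,\hat z)-q(\hat t,\hat z)\ge \overline u(t_0,z_0)-q(t_0,z_0)$, hence $(\hat t,\hat z)=(t_0,z_0)$ by strictness, and then the values converge. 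In particular $(\hat t_j,\hat z_j)$ eventually lies in the interior of $\overline N$, so $q$ (plus a vanishing constant $c_j:=u_{\tau_j}(\hat t_j,\hat z_j)-q(\hat t_j,\hat z_j)\to 0$) is a genuine upper test function for $u_{\tau_j}$ at the interior point $(\hat t_j,\hat z_j)$.

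Applying the subsolution property of $u_{\tau_j}$ at $(\hat t_j,\hat z_j)$ with the test function $q+c_j$ gives
\begin{equation}\label{eq:jet-ineq}
F\bigl(H_z q(\hat t_j,\hat z_j)\bigr)\ge e^{\partial_t q(\hat t_j,\hat z_j)+G(\hat t_j,\hat z_j,\,q(\hat t_j,\hat z_j)+c_j)}\,g(\hat z_j).
\end{equation}
Now let $j\to\infty$. Since $q$ is $C^{(1,2)}$, the quantities $\partial_t q(\hat t_j,\hat z_j)$ and $H_z q(\hat t_j,\hat z_j)$ converge to $\partial_t q(t_0,z_0)$ and $H_z q(t_0,z_0)$; continuity of $G$ and of $g$, together with $c_j\to 0$ and $q(\hat t_j,\hat z_j)\to q(t_0,z_0)=\overline u(t_0,z_0)$, handles the right-hand side. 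For the left-hand side I would invoke upper semicontinuity of $F$ on $\mathcal H^n$: $F$ is continuous on $\overline{M(\Gamma,n)}$ (as $f$ is continuous on $\overline\Gamma$) and identically $-\infty$ off it, hence $\limsup_j F(H_zq(\hat t_j,\hat z_j))\le F(H_zq(t_0,z_0))$ --- in fact one does not even need USC here since \eqref{eq:jet-ineq} forces every $H_zq(\hat t_j,\hat z_j)\in\overline{M(\Gamma,n)}$ (the right side being $\ge 0$), that set is closed, so the limiting matrix lies in it and $F$ is continuous there. Passing to the limit in \eqref{eq:jet-ineq} yields exactly the subsolution inequality for $\overline u$ at $(t_0,z_0)$, completing part 1. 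The dual statement for $\inf_\tau u_\tau$ in part 2 follows by the substitution indicated above, noting that the LSC envelope and the monotonicity of $G$ align the inequalities correctly; part 3 follows as explained.

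The main obstacle is purely the bookkeeping around the value $-\infty$ of $F$ and around the strict-maximum localization: one must be careful that the perturbation making the maximum strict does not disturb the $C^{(1,2)}$ regularity of $q$ or its jet at $(t_0,z_0)$ (adding $|z-z_0|^4+(t-t_0)^2$ is harmless), and that the max of $u_{\tau_j}-q$ over the closed neighbourhood is attained despite $u_{\tau_j}$ being merely USC --- which it is, by Weierstrass on a compact set. Everything else is the textbook stability argument; the exponential nonlinearity and the $G$-term are continuous and thus pass to the limit without incident.
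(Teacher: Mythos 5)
The paper never proves this lemma itself: it is quoted as classical with a pointer to \cite{CIL92, IS12}, so there is no internal proof to compare against. Your treatment of part 1 is the standard stability argument and is correct, including the key observation that the right-hand side is nonnegative, so the subsolution inequality forces the test Hessians into the closed set $\overline{M(\Gamma,n)}$, where $F$ is continuous; hence the value $-\infty$ of $F$ causes no trouble when passing to the limit.

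The gap is in your one-line dispatch of part 2. It is not ``formally dual'' to part 1: replacing $u$ by $-v$ does not map the equation to itself ($F$ is not odd, and the region where $F=-\infty$ is one-sided), so you must rerun the envelope argument with lower test functions and minimizers, and there the mechanism you exploited in part 1 is unavailable. For a supersolution the inequality $F(H\tilde q(\hat t_j,\hat z_j))\le R_j$ is vacuous whenever $H\tilde q(\hat t_j,\hat z_j)\notin\overline{M(\Gamma,n)}$, and $F$ is \emph{not} lower semicontinuous at $\partial\overline{M(\Gamma,n)}$ (it drops to $-\infty$ from outside); so if $Hq(t_0,z_0)$ lies on $\partial\overline{M(\Gamma,n)}$, the approximating Hessians may all sit outside the cone and the family gives you no information in the limit. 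The statement is nevertheless true, but it needs one extra idea which your write-up does not contain: argue by contradiction. If the supersolution inequality fails for $\underline u$ at $(t_0,z_0)$, then $f(\lambda(Hq(t_0,z_0)))>e^{\partial_t q(t_0,z_0)+G(t_0,z_0,q(t_0,z_0))}g(z_0)\ge 0$; since $f|_{\partial\Gamma}=0$ and $f>0$ only on $\Gamma$, this forces $Hq(t_0,z_0)$ into the \emph{open} cone $M(\Gamma,n)$, so the Hessians at the approximating points eventually lie in $M(\Gamma,n)$ as well, $F$ is continuous there, and you contradict the supersolution property of $u_{\tau_j}$. Without this step part 2 is unproved.

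Part 3 as sketched also does not give what is stated. Working with the tail suprema $v_N=\sup_{k\ge N}u_k$ (you wrote $\sup_{k\le N}$, presumably a slip) and applying part 1 plus a decreasing-limit argument produces the half-relaxed upper limit $\inf_N (v_N)^*$, which dominates, and in general \emph{strictly} dominates, the function $(\limsup_k u_k)^*$ in the statement; the strict-maximum localization then runs for the larger function, not the one you need. To reach $(\limsup_k u_k)^*$ you must build a diagonal choice of indices $k_m\to\infty$ and points $y_m\to(t_0,z_0)$ with $u_{k_m}(y_m)\to\overline u(t_0,z_0)$ into the argument, and even then the upper bound at the limit point of the maximizers has to be handled with care. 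So parts 2 and 3 need to be written out in full; they are not corollaries of part 1 by the reductions you indicate.
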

\subsection{The parabolic Jensen-Ishii’s maximum principle}
 Denote by $\mathcal{S}_{2n}$ the space of all $2n\times 2n$ symmetric matrices.
For each function  $ u : \Omega_T \longrightarrow \R$ and for every $(t_0,z_0) \in \Omega_T$, 
we define by 	$\mathcal P^{2,+} u (t_0,z_0)$ the set of $(\tau, p, Q)\in\R\times \R^{2n}\times\mathcal{S}_{2n}$ satisfying
\begin{equation} 
	u (t,z)  \leq u (t_0,z_0) +  \tau (t-t_0) + o (\vert t-t_0\vert) 
	+ \langle p, z - z_0\rangle + \frac{1}{2} \langle Q (z - z_0), z-z_0\rangle   
	+ o (\vert z - z_0\vert^2),
\end{equation}
and denote by $\bar{\mathcal P}^{2,+} u (t_0,z_0)$ the set of 
$(\tau, p, Q)\in\R\times \R^{2n}\times\mathcal{S}_{2n}$ satisfying: $\exists (t_m, z_m)\rightarrow (t_0, z_0)$
and $(\tau_m, p_m, Q_m)\in \mathcal P^{2,+} u (t_0,z_0)$ such that $(\tau_m, p_m, Q_m)\rightarrow (\tau, p, Q)$ and
$u(t_m, z_m)\rightarrow u(t_0, z_0)  \}.$\\

We define in the same way  the  sets  $\mathcal P^{2,-} u (t_0,z_0)$ and $\bar{\mathcal P}^{2,-} u (t_0,z_0)$
by 
$$
\mathcal P^{2,-} u (t_0,z_0) = - \mathcal P^{2,+} (-u) (t_0,z_0),
$$  
and
$$
\bar{\mathcal P}^{2,-} u (t_0,z_0)=-\bar{\mathcal P}^{2,+}(-u)(t_0,z_0).
$$ 

\begin{Prop}\label{prop def vis}
	\begin{itemize}
		\item [(i)] An upper semi-continuous function $u : \Omega_T \longrightarrow \R$ is a subsolution to the parabolic equation  \eqref{PHE def} iff for all $(t_0, z_0) \in \Omega_T$ and $(\tau, p, Q) \in \mathcal{P}^{2,+}u(t_0, z_0) $ we have
		$$e^{\tau + G(t_0, z_0, u(t_0, z_0))}g(z_0) \leq F(HQ),$$
		where $HQ:= (H \left\langle Qz, z \right\rangle) , z \in \mathbb{C}^n=\mathbb{R}^{2n}$.
		
		\item [(ii)] A lower semi-continuous function $v : \Omega_T \longrightarrow \R$  is 
		a supersolution to the parabolic equation \eqref{PHE def}  iff for all $(t_0, z_0) \in \Omega_T$ and $(\tau, p, Q) \in \mathcal{P}^{2,-}u(t_0, z_0) $ we have
		$$e^{\tau + G(t_0, z_0, v(t_0, z_0))}g(z_0) \geq F(HQ).$$
	\end{itemize}
\end{Prop}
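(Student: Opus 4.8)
The statement is the parabolic counterpart of the classical equivalence, in viscosity theory, between the test-function and the semi-jet formulations of sub/super-solutions, so the plan is to run the usual two-sided argument, keeping track of the $C^{(1,2)}$ regularity of test functions and of the parabolic structure of $\mathcal P^{2,\pm}$. The only place where the equation enters is the elementary fact that the complex Hessian of the $z$-part of a $C^{(1,2)}$ function coincides with the matrix ``$HQ$'' of the statement attached to its second-order $z$-jet: if $q$ is $C^{(1,2)}$ near $(t_0,z_0)$, then by the fundamental theorem of calculus in $t$ and the second-order Taylor expansion in $z$, with $\tau=\partial_tq(t_0,z_0)$, $p=D_zq(t_0,z_0)$ and $Q=D_z^2q(t_0,z_0)\in\mathcal S_{2n}$,
\[
q(t,z)=q(t_0,z_0)+\tau(t-t_0)+\langle p,z-z_0\rangle+\tfrac12\langle Q(z-z_0),z-z_0\rangle+o\big(|t-t_0|+|z-z_0|^2\big),
\]
and $Hq(t_0,z)\mid_{z=z_0}=HQ$. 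I carry out part (i) only; part (ii) is entirely analogous, using lower test functions, $\mathcal P^{2,-}$ in place of $\mathcal P^{2,+}$, and the inequality reversed.

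\emph{Semi-jet condition $\Rightarrow$ subsolution.} Assume that $e^{\tau+G(t_0,z_0,u(t_0,z_0))}g(z_0)\le F(HQ)$ for all $(t_0,z_0)\in\Omega_T$ and all $(\tau,p,Q)\in\mathcal P^{2,+}u(t_0,z_0)$. Fix $(t_0,z_0)\in\Omega_T$ and an upper test function $q$ for $u$ there (we may assume $u(t_0,z_0)>-\infty$, since otherwise $\mathcal P^{2,+}u(t_0,z_0)=\emptyset$ and there is nothing to check). By the Taylor expansion above, together with $u\le q$ near $(t_0,z_0)$ and $u(t_0,z_0)=q(t_0,z_0)$, the triple $\big(\partial_tq(t_0,z_0),D_zq(t_0,z_0),D_z^2q(t_0,z_0)\big)$ lies in $\mathcal P^{2,+}u(t_0,z_0)$, its Taylor remainder being of the little-$o$ form allowed in the definition of $\mathcal P^{2,+}$. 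Inserting this triple into the hypothesis and using $q(t_0,z_0)=u(t_0,z_0)$ and $Hq(t_0,z)\mid_{z=z_0}=HQ$ gives exactly
\[
F\big(Hq(t_0,z)\big)\mid_{z=z_0}\ \ge\ e^{\partial_tq(t_0,z_0)+G(t_0,z_0,q(t_0,z_0))}g(z_0);
\]
as $q$ and $(t_0,z_0)$ were arbitrary, $u$ is a subsolution.

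\emph{Subsolution $\Rightarrow$ semi-jet condition.} Assume $u$ is a subsolution and fix $(\tau,p,Q)\in\mathcal P^{2,+}u(t_0,z_0)$; it suffices to build one upper test function for $u$ at $(t_0,z_0)$ with $\partial_t$-derivative $\tau$, $z$-gradient $p$ and $z$-Hessian $Q$ there. By definition the error in the inequality defining $\mathcal P^{2,+}$ has the form $o(|t-t_0|)+o(|z-z_0|^2)$; a standard smoothing of the associated moduli (cf.\ \cite{CIL92, IS12}) produces $\rho_1\in C^1([0,\infty))$ with $\rho_1(0)=\rho_1'(0)=0$ and $\rho_2\in C^2([0,\infty))$ with $\rho_2(0)=\rho_2'(0)=0$ such that this error is dominated, near $(t_0,z_0)$, by $\rho_1(|t-t_0|)+\rho_2(|z-z_0|^2)$. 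Then
\[
q(t,z):=u(t_0,z_0)+\tau(t-t_0)+\langle p,z-z_0\rangle+\tfrac12\langle Q(z-z_0),z-z_0\rangle+\rho_1(|t-t_0|)+\rho_2(|z-z_0|^2)
\]
is $C^{(1,2)}$ near $(t_0,z_0)$ ($t\mapsto\rho_1(|t-t_0|)$ is $C^1$ because $\rho_1'(0)=0$, and $z\mapsto\rho_2(|z-z_0|^2)$ is $C^2$), satisfies $q(t_0,z_0)=u(t_0,z_0)$ and $u\le q$ near $(t_0,z_0)$, and, because the first and second derivatives of the $\rho_i$-terms vanish at $(t_0,z_0)$, has $\partial_tq(t_0,z_0)=\tau$, $D_zq(t_0,z_0)=p$ and $Hq(t_0,z)\mid_{z=z_0}=HQ$. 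Hence $q$ is an upper test function for $u$, and the subsolution property applied to $q$ yields $F(HQ)\ge e^{\tau+G(t_0,z_0,u(t_0,z_0))}g(z_0)$, which is the asserted inequality.

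The only step that is not routine bookkeeping is the jet-realization in the last paragraph --- the construction of $\rho_1,\rho_2$ out of the moduli of the $o(\cdot)$ remainders so that $q$ is genuinely $C^{(1,2)}$ (only a $C^1$ amount of regularity must be matched in the $t$-variable) and touches $u$ from above with the prescribed $1$-jet in $t$ and $2$-jet in $z$; this is the parabolic version of a well-known lemma in viscosity theory (see \cite{CIL92}). Everything else is Taylor's formula and unwinding the definitions.
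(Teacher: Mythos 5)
Your argument is correct, and in fact supplies more than the paper does: the paper states Proposition \ref{prop def vis} without proof, treating it as the standard equivalence between the test-function and semi-jet formulations from the classical viscosity literature (\cite{CIL92}, \cite{IS12}). Your two-sided argument is exactly that standard one, and you correctly isolate the only non-routine step, the jet-realization: smoothing the moduli of the $o(|t-t_0|)$ and $o(|z-z_0|^2)$ remainders into $\rho_1\in C^1$, $\rho_2\in C^2$ with vanishing first (resp.\ first and second) derivatives at $0$, so that the constructed $q$ is genuinely $C^{(1,2)}$, touches $u$ from above, and has the prescribed $1$-jet in $t$ and $2$-jet in $z$. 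One harmless remark on conventions: the paper's notation $HQ:=H\langle Qz,z\rangle$ should be read as the Hermitian $(1,1)$-part associated with the real Hessian $Q$, i.e.\ the complex Hessian of the quadratic Taylor polynomial $\tfrac12\langle Q(z-z_0),z-z_0\rangle$, which is the identification your proof implicitly uses when asserting $Hq(t_0,z)\mid_{z=z_0}=HQ$.
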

 The parabolic Jensen-Ishii’s maximum principle is stated as follows:
\begin{The}\cite[Theorem 8.3]{CIL92}\label{the maximal}  
		Let  $u\in USC(\Omega_T)$ and $v\in LSC(\Omega_T)$.
	Let $\phi$ be a function defined in $(0, T) \times \Omega^2$ such that $(t, \xi ,\eta) \longmapsto \phi (t,\xi, \eta)$ is continuously differentiable in $t$ and twice continuously differentiable in $(\xi ,\eta)$.  
	
	Assume that the function $(t,\xi, \eta) \longmapsto u (t,\xi) - v(t,\eta) - \phi (t,\xi, \eta)$ has a local maximum at some point $(\hat t, \hat\xi, \hat\eta) \in (0, T) \times \Omega^2$. 
	
	Assume furthermore that both $w=u$ and $w=-v$ satisfy:
	\begin{displaymath}
	(\label{Cond}\ref{Cond})\left\{
	\begin{array}{ll}
		\forall (s,z) \in \Omega & \exists r>0 \ \text{such that} \ \forall M >0 \  \exists C \ \text{satisfying} \\
		&\left.
		\begin{array}{l}
			|(t, \xi)-(s,z)| \le r,\\
			(\tau,p,Q)\in \mathcal{P}^{2,+}w(t, \xi) \\
			|w(t, \xi)|+|p| + |Q| \le M
		\end{array} \right\} \Longrightarrow \tau\le C.
	\end{array}\right.
\end{displaymath}
	
	Then for any $\kappa > 0$, there exists 
$(\tau_1,p_1,Q^+) \in \bar{\mathcal P}^{2,+} u (\hat t, \hat \xi)$, 
$(\tau_2,p_2,Q^-) \in \bar{\mathcal P}^{2,-} v (\hat t, \hat \eta)$ such that
$$\tau_1 = \tau_2 + D_t \phi (\hat t, \hat \xi,\hat \eta), \ p_1 = D_{\xi} \phi (\hat t, \hat\xi,\hat\eta),
\ p_2 = - D_{\eta} \phi (\hat t, \hat\xi,\hat\eta)$$ and
$$
-\left(\frac{1}{\kappa} + \| A \| \right) I \leq 
\left(
\begin{array}{cc}
	Q^+ &0 \\
	0 & - Q^-
\end{array}
\right) \leq  A  + \kappa A^2,
$$
where $A := D_{\xi, \eta}^2 \phi (\hat t, \hat\xi,\hat\eta)\in\mathcal{S}_{4n}$. 
\end{The}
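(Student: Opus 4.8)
The ``final statement'' is Theorem \ref{the maximal}, the parabolic Jensen--Ishii maximum principle, which is explicitly attributed to \cite[Theorem 8.3]{CIL92}. Thus the appropriate ``proof'' here is not a from-scratch argument but an indication of how the cited statement is obtained, i.e.\ how one reduces the parabolic (time-dependent) maximum principle to the elliptic Jensen--Ishii lemma together with the structural condition $(\ref{Cond})$. Below I sketch the strategy of \cite{CIL92} in this form.

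\textbf{Step 1: Freeze time and apply the elliptic Jensen--Ishii lemma in the space variables.} The plan is to treat the parabolic problem as an elliptic problem in the variables $(\xi,\eta)\in\Omega^2$ with $t$ playing the role of a parameter, after first absorbing the time-behavior of $u-v-\phi$ near the maximum point. First I would localize: replace $u$ by $u$ restricted to a small product neighbourhood $I\times B(\hat\xi,r)\times B(\hat\eta,r)$ of $(\hat t,\hat\xi,\hat\eta)$ on which the local maximum is attained, and (by subtracting a quadratic penalization in $t$ of the form $\frac{1}{\varepsilon}(t-\hat t)^2$ and letting $\varepsilon\to 0$ at the end via a standard approximation argument) arrange that the maximum in the $t$-direction is strict. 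Then, for each fixed admissible $t$, the spatial Jensen--Ishii lemma (the elliptic Theorem 3.2 of \cite{CIL92}, applied to the semicontinuous envelopes in $(\xi,\eta)$) produces, for each $\kappa>0$, matrices $Q^+(t),Q^-(t)$ and gradients satisfying the stated two-sided matrix bound in terms of $A(t)=D^2_{\xi,\eta}\phi$, together with first-order slopes in $\xi,\eta$. The key point is that these belong to the \emph{spatial} second-order jets $\mathcal P^{2,+}_xu(t,\cdot)$, $\mathcal P^{2,-}_xv(t,\cdot)$.

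\textbf{Step 2: Recover the time-slope $\tau$ and assemble the parabolic jets.} This is where condition $(\ref{Cond})$ enters, and it is the crux of the argument. Having frozen $t$ and found spatial jets at the maximizing spatial points $\hat\xi(t),\hat\eta(t)$, I would use the fact that $t\mapsto u(t,\hat\xi(t))-v(t,\hat\eta(t))-\phi(t,\hat\xi(t),\hat\eta(t))$ has a maximum at $\hat t$ to extract a ``time super/subdifferential'': concretely, one shows that $u$ admits an element of the \emph{parabolic} jet $\bar{\mathcal P}^{2,+}u(\hat t,\hat\xi)$ whose spatial part is the $Q^+$ from Step 1 and whose time part $\tau_1$ is controlled, and similarly a $\tau_2$ for $v$, with $\tau_1-\tau_2=D_t\phi(\hat t,\hat\xi,\hat\eta)$ forced by the first-order condition in $t$. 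Condition $(\ref{Cond})$ is precisely what guarantees that the $\tau$ obtained in this limiting procedure is finite (it provides, locally and for bounded jets, an a priori upper bound $\tau\le C$), so that the closure $\bar{\mathcal P}^{2,+}$ is nonempty and the limits do not escape to $\pm\infty$ as $\kappa$ is fixed and the various regularizations are removed. Passing to the closures $\bar{\mathcal P}^{2,\pm}$ is what allows the time-slopes to be realized in the first place.

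\textbf{Step 3: Conclude.} Combining Steps 1 and 2, for the given $\kappa>0$ I obtain $(\tau_1,p_1,Q^+)\in\bar{\mathcal P}^{2,+}u(\hat t,\hat\xi)$ and $(\tau_2,p_2,Q^-)\in\bar{\mathcal P}^{2,-}v(\hat t,\hat\eta)$ with the gradient identities $p_1=D_\xi\phi$, $p_2=-D_\eta\phi$, the time identity $\tau_1=\tau_2+D_t\phi$, and the matrix inequality $-(\kappa^{-1}+\|A\|)I\le \operatorname{diag}(Q^+,-Q^-)\le A+\kappa A^2$ inherited verbatim from the elliptic lemma. The main obstacle, as indicated, is Step 2: making rigorous the passage from ``spatial jets for each frozen $t$'' to ``a single parabolic jet at $\hat t$ with a legitimate time-slope,'' which is exactly the technical content that condition $(\ref{Cond})$ is designed to supply; everything else is the elliptic Jensen--Ishii lemma plus bookkeeping. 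Since the statement is quoted from \cite{CIL92}, in the paper itself one simply cites Theorem 8.3 there and verifies, when the principle is \emph{used}, that the relevant $u$ and $-v$ satisfy $(\ref{Cond})$ — which for equation \eqref{PHE def} follows from the structure of $F$ and the positivity of $g$.
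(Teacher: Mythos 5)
The paper offers no proof of this statement at all --- it is quoted directly from \cite[Theorem 8.3]{CIL92} and used as a black box --- and your proposal likewise defers to that citation while only sketching how the cited result is obtained, so you are taking essentially the same approach as the paper. Your outline (elliptic Jensen--Ishii theorem on sums plus the structural condition \eqref{Cond} to keep the time-slopes $\tau_1,\tau_2$ under control when passing to the closures $\bar{\mathcal P}^{2,\pm}$) is a fair account of the standard argument, with the minor caveat that the proof in \cite{CIL92} regularizes jointly in $(t,\xi,\eta)$ rather than literally freezing $t$ and applying the elliptic lemma slice by slice.
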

\subsection{$\Gamma$-subharmonic functions and strictly $\Gamma$-pseudoconvex domains}
\begin{Def} Assume $U\subset \C^n$ is a domain. A  function smooth $u$ in $U$ is called  $\Gamma$-subharmonic if
$Hu(z)\in \overline{M(\Gamma, n)}$ for every $z\in U$. An upper semicontinuous function $u$ in $U$ is called  $\Gamma$-subharmonic if for every open set $V\subset U$ there exists a decreasing sequence $\{u_j\}$ of smooth
$\Gamma$-subharmonic functions in $V$ such that $u_j\rightarrow u$ as $j\rightarrow\infty$.
\end{Def}
Since $\Gamma\subset\Gamma_1$,  every $\Gamma$-subharmonic function is subharmonic. For every $1\leq k\leq n$,
 the set of $\Gamma_k$-subharmonic functions coincides with the set of $k$-subharmonic functions.
 
 The $\Gamma$-subharmonicity can be characterized through viscosity concepts. Given a continuous function $\psi\geq 0$ in $U$, we say that a function $u\in USC(U)$ is a viscosity subsolution of the
 equation $F(Hw)=\psi$ on $U$ if for every $ z_0\in\Omega$ and for every upper test function $q$ of $u$ at $z_0$, we have $F(Hq(z_0))  \geq \psi (z_0)$ (and then $Hq(z_0)\in \overline{M(\Gamma, n)}$).
 
 By \cite[Lemma 4.6, Remark 4.9 and Theorem B.8]{HL09}  (see also \cite{DDT} and \cite{DN21}), we have:
 \begin{Prop}\label{prop vis subharmonic}
 For every $u\in USC(U)$, the following conditions are equivalent:
 \begin{itemize}
 	\item[(i)] $F(Hu)\geq 0$ in the viscosity sense in $U$;
 	\item[(ii)] $u$ is $\Gamma$-subharmonic in $U$. 
 \end{itemize}
\end{Prop}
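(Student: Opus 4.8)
First I would split the equivalence into its two implications and observe that (ii) $\Rightarrow$ (i) is the soft one. If $w$ is smooth with $Hw\in\overline{M(\Gamma,n)}$ and $q$ is an upper test function for $w$ at $z_0$, then $q-w$ has a local minimum at $z_0$, so the real Hessian of $q-w$ at $z_0$ — and hence its complex Hessian — is positive semidefinite; since $\Gamma_n\subseteq\Gamma$ the set $\overline{M(\Gamma,n)}$ is closed under adding positive semidefinite matrices, so $Hq(z_0)\in\overline{M(\Gamma,n)}$ and $F(Hq(z_0))=f(\lambda(Hq(z_0)))\geq 0$ because $f\geq 0$ on $\overline{\Gamma}$. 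Thus a smooth $\Gamma$-subharmonic function is a classical, hence viscosity, subsolution of $F(Hw)\geq 0$. If now $u$ is $\Gamma$-subharmonic and $V\Subset U$, take $u_j\downarrow u$ smooth $\Gamma$-subharmonic on $V$; I would pass the subsolution property to $u$ by the standard monotone-limit stability: given an upper test function $q$ for $u$ at $z_0$, after replacing $q(z)$ by $q(z)+|z-z_0|^4$ the contact becomes strict, the maximum points $z_j$ of $u_j-q$ over a small closed ball tend to $z_0$ with $u_j(z_j)\to u(z_0)$, applying the classical subsolution property of $u_j$ at $z_j$ (up to an additive constant) gives $Hq(z_j)\in\overline{M(\Gamma,n)}$, and letting $j\to\infty$ uses that $\overline{M(\Gamma,n)}$ is closed. (The case $u\equiv-\infty$ is trivial, with $u_j=-j$.)

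For (i) $\Rightarrow$ (ii) the plan is mollification. Since $\Gamma\subseteq\Gamma_1$, any upper test function $q$ of $u$ satisfies $Hq\in\overline{M(\Gamma,n)}\subseteq\overline{M(\Gamma_1,n)}$, i.e. $\Delta q\geq 0$, so $u$ is a viscosity subsolution of the Laplace equation, hence subharmonic; assume $u\not\equiv-\infty$, so $u\in L^1_{\mathrm{loc}}(U)$ and, with a radial mollifier $\rho_{\epsilon}$, the functions $u*\rho_{\epsilon}$ are smooth on $U_{\epsilon}:=\{\,d(\cdot,\partial U)>\epsilon\,\}$ and decrease to $u$ as $\epsilon\downarrow 0$. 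It then suffices to show each $u*\rho_{\epsilon}$ is $\Gamma$-subharmonic — equivalently, since it is smooth, that $H(u*\rho_{\epsilon})\in\overline{M(\Gamma,n)}$ everywhere, i.e. that $u*\rho_{\epsilon}$ is a viscosity subsolution of $F(Hw)\geq 0$ — for then $\{u*\rho_{1/j}\}_{j\gg 1}$ is the desired decreasing sequence on any $V\Subset U$.

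Thus everything reduces to: convolution preserves viscosity subsolutions of $F(Hw)\geq 0$, and the feature making this true is that the constraint set $\{F\geq 0\}=\overline{M(\Gamma,n)}$ is a \emph{convex} cone (concavity of $f$, convexity of $\Gamma$) stable under positive semidefinite perturbations. I would argue through the sup-convolution $u^{\delta}(z)=\sup_{y}\big(u(y)-\tfrac{1}{2\delta}|z-y|^2\big)$: for small $\delta$ this is semiconvex on a slightly smaller domain, is again a viscosity subsolution of $F(Hw)\geq 0$ (the equation carries no explicit $z$-dependence, so this is the usual sup-convolution stability), and $u^{\delta}\downarrow u$. Being semiconvex, $u^{\delta}$ is twice differentiable a.e. (Alexandrov), and at a.e. point its pointwise complex Hessian is an admissible test jet, so $H_{\mathrm{pt}}u^{\delta}\in\overline{M(\Gamma,n)}$ a.e.; moreover the distributional complex Hessian of $u^{\delta}$ equals $H_{\mathrm{pt}}u^{\delta}\,dx$ plus a positive-semidefinite–matrix-valued singular measure. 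Convolving with $\rho_{\epsilon}\geq 0$, the absolutely continuous part becomes a $\rho_{\epsilon}\,dx$-average of matrices in the convex set $\overline{M(\Gamma,n)}$, hence stays in $\overline{M(\Gamma,n)}$, while the singular part contributes a positive semidefinite matrix; so $H(u^{\delta}*\rho_{\epsilon})\in\overline{M(\Gamma,n)}$ everywhere, i.e. $u^{\delta}*\rho_{\epsilon}$ is smooth $\Gamma$-subharmonic. Letting $\epsilon\downarrow 0$ shows $u^{\delta}$ is $\Gamma$-subharmonic, and a routine diagonal argument over $\delta\downarrow 0$ (with a harmless additive constant tending to $0$ to make the selected sequence strictly decreasing) exhibits $u$ as a decreasing limit of smooth $\Gamma$-subharmonic functions on any $V\Subset U$.

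The main obstacle is exactly this reduction in direction (i) $\Rightarrow$ (ii): one cannot differentiate $u*\rho_{\epsilon}$ under the integral sign when $u$ is merely upper semicontinuous, so a regularization step together with the convexity of $\overline{M(\Gamma,n)}$ is essential, and the measure-theoretic bookkeeping — the Lebesgue splitting of the distributional Hessian of a semiconvex function into an a.e. part obeying the constraint and a positive singular part, preserved under convolution inside the convex cone — is the technical heart. This is precisely what is packaged in \cite[Lemma 4.6, Remark 4.9 and Theorem B.8]{HL09} (see also \cite{DDT} and \cite{DN21}), which one may also simply invoke.
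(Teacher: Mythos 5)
Your proposal is correct in outline, but note that the paper does not actually prove Proposition \ref{prop vis subharmonic}: it simply invokes \cite[Lemma 4.6, Remark 4.9 and Theorem B.8]{HL09} (see also \cite{DDT}, \cite{DN21}), which is exactly the option you mention in your last sentence. What you have written is a self-contained reconstruction of the standard argument underlying those citations, and both directions are sound: (ii) $\Rightarrow$ (i) by passing the classical inequality for smooth $\Gamma$-subharmonic functions through decreasing limits (the strict-contact perturbation $q+|z-z_0|^4$ argument is the usual one), and (i) $\Rightarrow$ (ii) by sup-convolution, Alexandrov's theorem, and convolution inside the convex cone $\overline{M(\Gamma,n)}$. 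Compared with the paper's pure citation, your route buys transparency at the cost of having to justify several standard but nontrivial ingredients, which you should at least name as lemmas if this were written out: (a) the set $\{H\in\mathcal{H}^n:\lambda(H)\in\overline{\Gamma}\}$ is a closed \emph{convex} cone stable under addition of positive semidefinite matrices (this uses the symmetry and convexity of $\Gamma$ and $\Gamma_n\subseteq\Gamma$, e.g.\ via the concavity of $H\mapsto\min_{\sigma}\sum_i\mu_{\sigma(i)}\lambda_i(H)$); (b) the Lebesgue decomposition of the distributional Hessian of a semiconvex function into its a.e.\ Alexandrov part plus a positive-semidefinite singular measure, and the fact that the passage to the complex Hessian preserves positivity; (c) stability of the subsolution property under sup-convolution, which requires localization since $u$ is only locally bounded above (and the $u\equiv-\infty$ component case, which you dispose of correctly); (d) the monotone selection $u^{\delta_j}*\rho_{\epsilon_j}\downarrow u$, which works as you say because sup-convolutions decrease in $\delta$, radial mollifications of subharmonic functions increase pointwise and decrease in $\epsilon$, and $u^{\delta}*\rho_{\epsilon}(z)\leq\sup_{|w-z|\leq\epsilon+R(\delta)}u(w)$ with $R(\delta)\to0$ gives pointwise convergence to $u$ by upper semicontinuity. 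None of these is a gap --- they are precisely the content packaged in the Harvey--Lawson results the paper cites --- so your argument and the paper's differ only in that you reprove rather than quote.
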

Given a smooth domain $\Omega\subset\C^n$, we say a function $u\in C^{\infty}(\overline{\Omega})$ is strictly $\Gamma$-subharmonic if $Hu(z)\in M(\Gamma, n)$ for every $z\in\overline{\Omega}$. In particular, there exists $\epsilon>0$ such that $u-\epsilon|z|^2$ is  $\Gamma$-subharmonic. 
\begin{Def}
A smooth domain $\Omega\subset\C^n$ is called strictly $\Gamma$-pseudoconvex iff there exists a 
	strictly  $\Gamma$-subharmonic function $u\in C^{\infty}(\overline{\Omega})$ satisfying $u|_{\partial\Omega}=0$, $u|_{\Omega}<0$ and
	$\nabla u(z)\neq 0$ for every $z\in\partial\Omega$. 
\end{Def}

\section{Regularizing in time}\label{sec reg}
Let $u :\Omega_T \rightarrow \mathbb{R}$ be a bounded function and let $A>osc_{\Omega_T}u$. We define
$$u^k(t, z)=\sup\{u(t+s, z)-k\left| s\right| : \left| s\right|  \leq \dfrac{A}{k}\},$$
and
$$u_k(t, z)=\inf\{u(t+s, z)+k\left| s\right| : \left| s\right|  \leq \dfrac{A}{k}\},$$
for every $k > \dfrac{2A}{T}$ and $(t, z) \in \left( \dfrac{A}{k}, T-\dfrac{A}{k}\right) \times\Omega$.

The following result generalizes \cite[Lemma 3.5]{EGZ15b}:
\begin{Lem}\label{lem regularization sub} Assume $u$ is a bounded upper semicontinuous function on $\Omega_T$. Then
	\begin{itemize}
		\item [(i)] $u^k$ is upper semicontinuous on $\left( \dfrac{A}{k}, T-\dfrac{A}{k}\right) \times \Omega$;
		\item [(ii)] for every $(t, z) \in \left( \dfrac{A}{k}, T-\dfrac{A}{k}\right) \times \Omega$, 
		$$u(t, z) \leq u^k(t, z) \leq \sup\limits_{\left| s\right| \leq A/k}u(t+s, z);$$
		\item [(iii)] if $(t, z) \in \left( \dfrac{A}{k}, T-\dfrac{A}{k}\right) \times \Omega$ and $(t+s, z) \in \left( \dfrac{A}{k}, T-\dfrac{A}{k}\right) \times \Omega$ then
		$$\left| u^k(t,z)-u^k(t+s, z)\right| \leq k\left| s\right|;$$
		\item [(iv)] if $F(Hu)\geq e^{\partial_tu+F(t, z, u)g(t,z)}$ in $\Omega_T$ then, for $k\gg 1$,
		\begin{equation}\label{eq0 lem regu}
			F(Hu^k)\geq e^{\partial_t u^k+G_k(t, z, u^k)}g(z),
		\end{equation}
		in the viscosity sense in $\left( \dfrac{A}{k}, T-\dfrac{A}{k}\right) \times \Omega$, where $$G_k(t,z,r)=\inf\limits_{\left| s\right| \leq A/k}G(t+s, z, r).$$
	\end{itemize}
\end{Lem}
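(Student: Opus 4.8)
The plan is to treat parts (i)--(iii) as the usual properties of the (one-sided, cone-kernel) sup-convolution in time, and to put the real work into part (iv).

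\emph{Parts (i)--(iii).} For (i), observe that $(s,t,z)\mapsto u(t+s,z)-k|s|$ is upper semicontinuous on $\{|s|\le A/k\}\times\big((A/k,T-A/k)\times\Omega\big)$, being the sum of the composition of the USC function $u$ with the continuous translation $(s,t,z)\mapsto(t+s,z)$ and the continuous function $-k|s|$ (note $(t+s,z)\in\Omega_T$ when $t\in(A/k,T-A/k)$, $|s|\le A/k$). The supremum of an upper semicontinuous function over the \emph{compact} parameter set $\{|s|\le A/k\}$ is again upper semicontinuous in $(t,z)$: if $(t_m,z_m)\to(t,z)$, pick maximizers $s_m$, pass to a subsequence $s_m\to s_\ast$ and use USC at $(s_\ast,t,z)$. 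This gives $u^k\in USC$. Part (ii) is immediate (take $s=0$ for the lower bound, drop $-k|s|\le0$ for the upper bound). For (iii) the point is that $A>\mathrm{osc}_{\Omega_T}u$ forces the supremum to be attained away from $|s|=A/k$: indeed $|s|=A/k$ gives $u(t+s,z)-k|s|\le\sup_{\Omega_T}u-A<\inf_{\Omega_T}u\le u(t,z)$. Consequently, for $(t,z)$ in the interior domain,
$$u^k(t,z)=\sup\{\,u(\sigma,z)-k|\sigma-t|:\sigma\in(0,T)\,\}=:\hat u(t,z),$$
and $\hat u$ is manifestly $k$-Lipschitz in $t$: for any $\sigma$, $u(\sigma,z)-k|\sigma-t_1|\le u(\sigma,z)-k|\sigma-t_2|+k|t_1-t_2|\le\hat u(t_2,z)+k|t_1-t_2|$; taking $\sup_\sigma$ and then exchanging $t_1\leftrightarrow t_2$ gives $|\hat u(t_1,z)-\hat u(t_2,z)|\le k|t_1-t_2|$, hence (iii).

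\emph{Part (iv).} Fix $s$ with $|s|\le A/k$ and set $v_s(t,z):=u(t+s,z)-k|s|$, which is USC on $(A/k,T-A/k)\times\Omega$. First, $v_s$ is a viscosity subsolution of $F(Hw)=e^{\partial_t w+G(t+s,z,w+k|s|)}g(z)$ there: if $q$ is an upper test function for $v_s$ at $(t_0,z_0)$, then $(t,z)\mapsto q(t-s,z)+k|s|$ is an upper test function for $u$ at $(t_0+s,z_0)\in\Omega_T$, and applying the subsolution property of $u$ and substituting back (using that $\partial_t$ and $H$ are unchanged under translation in $t$ and under adding a constant) yields exactly the inequality defining a subsolution of that equation, with the explicit function $\widetilde G(t,z,r)=G(t+s,z,r+k|s|)$. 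Since $G$ is non-decreasing in its last variable and $k|s|\ge0$,
$$G(t+s,z,r+k|s|)\ \ge\ G(t+s,z,r)\ \ge\ \inf_{|\sigma|\le A/k}G(t+\sigma,z,r)\ =\ G_k(t,z,r);$$
as $g\ge0$ and $e^{(\cdot)}$ is increasing, this upgrades the previous statement to: $v_s$ is a viscosity subsolution of $F(Hw)=e^{\partial_t w+G_k(t,z,w)}g(z)$ on $(A/k,T-A/k)\times\Omega$, for \emph{every} $|s|\le A/k$ simultaneously (and $G_k$ inherits continuity and monotonicity in $r$). Now $u^k=\sup_{|s|\le A/k}v_s$ is bounded above by $\sup_{\Omega_T}u$ and, by (i), already upper semicontinuous, so Lemma~\ref{lem inf sup}(1) applied to the family $\{v_s\}_{|s|\le A/k}$ gives that $u^k=(\sup_{|s|\le A/k}v_s)^{*}$ is a viscosity subsolution of $F(Hw)=e^{\partial_t w+G_k(t,z,w)}g(z)$ on $(A/k,T-A/k)\times\Omega$, which is precisely \eqref{eq0 lem regu}. (The hypothesis $k>2A/T$ only ensures the interior domain is nonempty; no further largeness of $k$ enters.)

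\emph{Main obstacle.} The only genuinely non-formal point is the translation step in (iv), i.e.\ that $w\mapsto u(\cdot+s,\cdot)$ carries a viscosity subsolution of the parabolic equation to a subsolution of the equation with $G(t,z,\cdot)$ replaced by $G(t+s,z,\cdot)$; here one must check that upper test functions transform correctly and that the comparison right-hand side changes only through the explicit $t$-dependence of $G$. Everything else is bookkeeping: lining up the translated domains of definition so that every $v_s$ is a subsolution on the \emph{common} interior domain, using monotonicity of $G$ in $r$ to absorb the constant $k|s|$ and produce a single equation (with $G_k$) satisfied by all the $v_s$ at once, and the observation that $u^k$ is already USC so that no information is lost in passing to the upper-semicontinuous envelope in Lemma~\ref{lem inf sup}.
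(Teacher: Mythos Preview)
Your proof is correct and follows essentially the same approach as the paper: translate test functions to show each $v_s$ is a subsolution of the original equation shifted in $t$, use monotonicity of $G$ in $r$ together with the definition of $G_k$ to pass to a single common equation, and then apply Lemma~\ref{lem inf sup} to the family $\{v_s\}$ (using part~(i) to identify $u^k$ with its upper envelope). The only minor variation is in part~(iii), where you observe that $A>\mathrm{osc}_{\Omega_T}u$ makes the constraint $|s|\le A/k$ inactive and then invoke the standard $k$-Lipschitz property of the unrestricted sup-convolution, whereas the paper argues by a direct case split on whether $|s-s_0|\le A/k$; both routes are valid and equally short.
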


\begin{proof}
	\begin{itemize}
		\item [(i)] Let $(t_0, z_0) \in \left( \dfrac{A}{k}, T-\dfrac{A}{k}\right) \times \Omega$. We will show that 
		$$u^k(t_0,z_0) \geq \limsup\limits_{(t, z) \rightarrow (t_0,z_0)}u^k(t,z).$$
		Choose $(t_m, z_m) \in \left( \dfrac{A}{k}, T-\dfrac{A}{k}\right) \times \Omega$ such that $(t_m, z_m) \rightarrow (t_0, z_0)$ as $m \rightarrow \infty$ and
		$$\limsup\limits_{(t, z) \rightarrow (t_0,z_0)}u^k(t,z)=\lim\limits_{m \rightarrow \infty} u^k(t_m,z_m).$$
		Since $u$ is upper continuous, there exists $-A/k\leq s_m\leq A/k$ such that
		$$u^k(t_m,z_m)=u(t_m+s_m,  z_m)-k\left| s_m\right|.$$
	Let $\{s_{m_l}\}$ be a subsequence of $\{s_m\}$, $s_{m_l}$ converging to a point $s_0 \in \left[ -\dfrac{A}{k}, \dfrac{A}{k}\right] $ as $m_l \rightarrow \infty$. Then
		\begin{equation*}
			\begin{aligned}
				\limsup\limits_{(t, z) \rightarrow (t_0,z_0)}u^k(t,z) & = \lim\limits_{m_l \rightarrow \infty}u^k(t_{m_l}, z_{m_l})\\
				& =\lim\limits_{m_l \rightarrow \infty} u(s_{m_l}+ t_{m_l},  z_{m_l}) -k\left| s_0\right| \\
				& \leq u(s_0+t_0, z_0)-k\left| s_0\right| \\
				& \leq u^k(t_0, z_0).\\
			\end{aligned}
		\end{equation*}
		Thus $u^k$ is upper semicontinuous in $\left( \dfrac{A}{k}, T-\dfrac{A}{k}\right) \times \Omega$.
		
		\item [(ii)] 	for every $(t, z) \in \left( \dfrac{A}{k}, T-\dfrac{A}{k}\right)\times\Omega $, we have
		$$u(t+s, z) -k|s| \leq u(t+s, z).$$
		Taking the supremum of both sides over all $s$ such that $|s| \leq \dfrac{A}{k}$, we have
		 $$u^k(t, z) \leq \sup\limits_{\left| s\right| \leq A/k}u(t+s, z).$$
		Moreover
		\begin{equation}
			u^k(t, z)=\sup\{u(t+s, z)-k\left| s\right| : \left| s\right|  \leq \dfrac{A}{k}\} 
			\geq  u(t, z).
		\end{equation}
	Then
		$$u(t, z) \leq u^k(t, z) \leq \sup\limits_{\left| s\right| \leq A/k}u(t+s, z).$$
		\item [(iii)] Assume $s_0 \in \left[ -\dfrac{A}{k}, \dfrac{A}{k}\right] $ satisfies  $u^k(t, z)=u(t+s_0, z)-k|s_0|$.
		
		If  $|s-s_0|>A/k$ then
		\begin{equation*}
			\begin{aligned}
				u^k(t, z)=u(t+s_0, z)-k|s_0| &\leq u(t+s, z)+2 osc_{\Omega_T}u-k|s_0|\\
				&\leq u^k(t+s, z)+A-k|s_0|\\
				&\leq  u^k(t+s, z)+k|s-s_0|-k|s_0|\\
				&\leq  u^k(t+s, z)+k|s|.
			\end{aligned}
		\end{equation*}
		If  $|s-s_0|\leq A/k$ then
		\begin{equation*}
			\begin{aligned}
				u^k(t, z)=u(t+s_0, z)-k|s_0|&\leq u^k(t+s, z)+k|s-s_0|-k|s_0|\\
				&\leq  u^k(t+s, z)+k|s|.
			\end{aligned}
		\end{equation*}
		Then
		$$u^k(t, z)-u^k(t+s, z)\leq k|s|.$$
		By the same way, we have 
		$$u^k(t+s, z)-u^k(t, z)\leq k|s|.$$
		Hence
		$$|u^k(t, z)-u^k(t+s, z)|\leq k|s|.$$
		
		\item [(iv)] 
		
		Let $(t_0, z_0)\in (\delta, T-\delta)\times\Omega$, $s_0\in (-A/k, A/k)$ and let $q$ be an upper test function of
		$u_{s_0}(t, z):=u(t+s_0, z)-k|s_0|$ at $(t_0, z_0)$. Then $\hat{q}(t, z):=q(t-s_0,  z)+k|s_0|$
		is an upper test function of  $u$ at $(\hat{t}, \hat{z})=(t_0+s_0,  z_0)$. Since $F(Hu)\geq e^{\dt u+G(t, z, u)}g(z)$ in the viscosity sense, we have
		\begin{equation}\label{eq2 lem regu}
			F(\hat{q}(\hat{t}, \xi))|_{\xi=\hat{z}}\geq 
			e^{\dt\hat{q}(\hat{t}, \hat{z})+G(\hat{t}, \hat{z}, \hat{q}(\hat{t}, \hat{z}))}g(\hat{z}).
		\end{equation}
		Since $\dt\hat{q}(\hat{t}, \hat{z})=\dt q(t_0, z_0)$ and
		\begin{center}
			$F(Hq(t_0, \xi))|_{\xi=z_0}=F(H\hat{q}(\hat{t}, \xi))|_{\xi=\hat{z}},$
		\end{center}
		it follows from \eqref{eq2 lem regu} that
		\begin{flushleft}
			$\begin{array}{ll}
				F(Hq(t_0, \xi))^n|_{\xi=z_0}&\geq e^{\dt q(t_0, z_0)+G(\hat{t}, \hat{z}, \hat{q}(\hat{t}, \hat{z}))}g(\hat{z})\\
				&\geq e^{\dt q(t_0, z_0)+G(\hat{t}, \hat{z}, q(t_0, z_0))}g(\hat{z})\\
				&\geq e^{\dt q(t_0, z_0)+G_k(t_0, z_0, q(t_0, z_0))}g(z_0).
			\end{array}$
		\end{flushleft}
		Hence, $u_{s_0}$ is a viscosity subsolution of the equation
		\begin{equation}\label{eq3 lem regu}
			F(Hw)^n= e^{\dt w+G_k(t, z, w)}g(z)
		\end{equation}
		in
		$(\delta, T-\delta)\times\Omega$. By Lemma \ref{lem inf sup}, we get
		$$u^k=\sup_{|s_0|\leq A/k}u_{s_0}=(\sup_{|s_0|\leq A/k}u_{s_0})^*$$ 
		is a viscosity subsolution of the equation
		 \eqref{eq3 lem regu} in
		$\left( \dfrac{A}{k}, T-\dfrac{A}{k}\right) \times \Omega$.	
	\end{itemize}
\end{proof}

Similar, we have the following lemma

\begin{Lem}\label{lem regularization super}
	Assume $u$ is a bounded lower semicontinuous function in  $\Omega_T$. Then
	\begin{itemize}
		\item[(i)] $u_k$ is a semicontinuous function in $(A/k, T-A/k)\times\Omega$.
		\item[(ii)] for every $(t, z)\in (A/k, T-A/k)\times U$, 
		\begin{center}
			$u(t, z)\geq u_k(t, z)\geq \inf_{|s|\leq A/k}u(t+s, z);$
		\end{center}
		\item[(iii)] if $(t, z)$ and $(t+s, z)$ belong in $(A/k, T-A/k)\times\Omega$ then
		\begin{center}
			$|u_k(t, z)-u_k(t+s, z)|\leq k|s|;$
		\end{center}
		\item[(iv)] if $F(Hu)\leq e^{\dt u+F(t, z, u)}g(t, z)$  in the viscosity sense in  $\Omega_T$ then,
		for  $k\gg 1$, 
		$$F(Hu_k)\leq e^{\dt u_k+G^k(t, z, u_k)}g(z),$$ in the viscosity sense in  $\left( \dfrac{A}{k}, T-\dfrac{A}{k}\right) \times \Omega$,
		where $G^k(t, z, r)=\sup_{|s|\leq A/k}G(t+s, z, r)$.
	\end{itemize}
\end{Lem}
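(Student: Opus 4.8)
The plan is to follow the proof of Lemma~\ref{lem regularization sub} almost verbatim, systematically interchanging suprema and infima, $USC$ and $LSC$, upper and lower test functions, and reversing the relevant inequalities. (One could instead invoke the identity $u_k=-(-u)^k$, but since the right-hand side $e^{\dt w+G}g$ is not preserved by $w\mapsto -w$ it is cleaner to repeat the argument directly.)

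For (i) --- which should assert that $u_k$ is \emph{lower} semicontinuous --- I fix $(t_0,z_0)$, choose $(t_m,z_m)\to(t_0,z_0)$ realising $\liminf u_k$, use lower semicontinuity of $u$ together with compactness of $[-A/k,A/k]$ to find $s_m$ attaining the infimum in $u_k(t_m,z_m)$, pass to a convergent subsequence $s_m\to s_0$, and conclude $\liminf_m u_k(t_m,z_m)=\liminf_m\big(u(t_m+s_m,z_m)+k|s_m|\big)\ge u(t_0+s_0,z_0)+k|s_0|\ge u_k(t_0,z_0)$. Part (ii) is immediate from $u(t+s,z)+k|s|\ge u(t+s,z)$ (lower bound, take the infimum) and the admissible choice $s=0$ (upper bound). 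For (iii), let $s_0$ attain $u_k(t+s,z)=u(t+s+s_0,z)+k|s_0|$: if $|s+s_0|\le A/k$ then $u_k(t,z)\le u(t+s+s_0,z)+k|s+s_0|\le u_k(t+s,z)+k|s|$, and if $|s+s_0|>A/k$ then, since $|s|+|s_0|\ge|s+s_0|>A/k$ and $osc_{\Omega_T}u<A$, $u_k(t,z)\le u(t,z)\le u(t+s+s_0,z)+A\le u(t+s+s_0,z)+k|s|+k|s_0|=u_k(t+s,z)+k|s|$; swapping $t$ and $t+s$ gives the opposite inequality, hence the Lipschitz bound.

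The real content is (iv). For each fixed $s_0$ with $|s_0|\le A/k$ I set $u^{s_0}(t,z):=u(t+s_0,z)+k|s_0|$ on $\big(\tfrac Ak,T-\tfrac Ak\big)\times\Omega$, noting that $t+s_0\in(0,T)$ there. If $q$ is a lower test function of $u^{s_0}$ at $(t_0,z_0)$, then $\hat q(t,z):=q(t-s_0,z)-k|s_0|$ is a lower test function of $u$ at $(\hat t,z_0):=(t_0+s_0,z_0)$, so the supersolution property of $u$ gives $F(H\hat q(\hat t,\xi))|_{\xi=z_0}\le e^{\dt\hat q(\hat t,z_0)+G(\hat t,z_0,\hat q(\hat t,z_0))}g(z_0)$. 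Using $\dt\hat q(\hat t,z_0)=\dt q(t_0,z_0)$, $F(H\hat q(\hat t,\xi))|_{\xi=z_0}=F(Hq(t_0,\xi))|_{\xi=z_0}$, the identity $\hat q(\hat t,z_0)=q(t_0,z_0)-k|s_0|\le q(t_0,z_0)$, monotonicity of $G$ in the last variable, and $G(\hat t,z_0,\cdot)\le G^k(t_0,z_0,\cdot)$, I obtain
$$F(Hq(t_0,\xi))|_{\xi=z_0}\le e^{\dt q(t_0,z_0)+G^k(t_0,z_0,q(t_0,z_0))}g(z_0),$$
i.e. $u^{s_0}$ is a supersolution of $F(Hw)=e^{\dt w+G^k(t,z,w)}g(z)$ on $\big(\tfrac Ak,T-\tfrac Ak\big)\times\Omega$. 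Since $G^k(t,z,r)=\sup_{|s|\le A/k}G(t+s,z,r)$ is continuous (the supremum, over the fixed compact set $\{|s|\le A/k\}$, of a function jointly continuous in all its variables) and non-decreasing in $r$, and $\inf_{|s_0|\le A/k}u^{s_0}=u_k$ is bounded below because $u$ is bounded, Lemma~\ref{lem inf sup}(2) shows that $\big(\inf_{|s_0|\le A/k}u^{s_0}\big)_*$ is a supersolution of the same equation; by part (i) this lower envelope equals $u_k$, which finishes the proof.

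The main obstacle is just the sign bookkeeping in (iv): the constant $-k|s_0|$ lowers the value of the test function, which by monotonicity can only lower $G$, so the direction $F(Hq)\le e^{\dt q+G(\cdots)}g\le e^{\dt q+G^k(\cdots)}g$ needed for the supersolution inequality is preserved; and one must record that $G^k$ keeps enough regularity (continuity and monotonicity in $r$) to feed into Lemma~\ref{lem inf sup}. Everything else is a straightforward transcription of Lemma~\ref{lem regularization sub}.
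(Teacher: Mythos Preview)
Your proposal is correct and is exactly what the paper intends: the paper gives no separate proof of this lemma, only the remark ``Similar, we have the following lemma'' after the detailed proof of Lemma~\ref{lem regularization sub}, and your transcription (swapping $\sup/\inf$, $USC/LSC$, upper/lower test functions, and reversing inequalities) is precisely that dual argument. Your extra observations---that $G^k$ inherits continuity and monotonicity in $r$, and that by part~(i) the lower envelope $(\inf_{s_0}u^{s_0})_*$ already equals $u_k$---are the small bookkeeping points needed to invoke Lemma~\ref{lem inf sup}(2), and they are handled correctly.
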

\section{$\Gamma$-subharmonicity of viscosity subsolutions}\label{sec subharmonic}
In the case of complex Parabolic Monge-Amp\`ere equations, Eyssidieux-Guedj-Zeriahi \cite{EGZ15b} show that if $u$ is a viscosity subsolution then $u(t, z)$ is psh in $z$ for every $t$. In this section, we extend this
result to the case of the equation
\begin{center}
	$F(Hu)=e^{\dt u+G(t, z, u)}g(z).$
\end{center}
 First we recall some basic concepts. A function $u : U \subset \R^N \longrightarrow \R$ is semi-convex in $U$ if for every ball  $B \Subset U$, there exists $A > 0$ such that the function $x \longmapsto u (x) + A \vert x\vert^2$ is convex in $B$.

A second order super jet  ${\mathcal J}^{2,+} u (x_0)$ at
$x_0 \in U$ of a function $u : U \longrightarrow \R$ is the set of all $(p,Q) \in \R^N \times \mathcal S_N$ satisfying
$$
u (x) \leq u (x_0) + \langle p,x - x_0\rangle + \frac{1}{2} \langle Q (x-x_0),x - x_0\rangle +  o (\vert x - x_0\vert^2).
$$
The set $\bar{\mathcal J}^{2,+} u (x_0)$  is defined by the same way as in the parabolic case (see Preliminaries). We also denote
\begin{center}
	${\mathcal J}^{2,-} u (x_0)=-{\mathcal J}^{2,+} (-u) (x_0)$
	and $\bar{\mathcal J}^{2,-} u (x_0)=-\bar{\mathcal J}^{2,+} (-u) (x_0)$.
\end{center}
The following lemma is a special case of \cite[Lemma 3.2]{EGZ15b}:
\begin{Lem} \label{lem:subpp} Let $\psi\geq 0$ be a continuous function in a a domain
	$\Omega\Subset\C^n$.
	1.  Assume $u$ is a semi-convex function in $\Omega$ satisfying
	$$
	F(HQ) \geq \psi, \forall (p,Q) \in  {\mathcal J}^{2,+} u (z_0),
	$$
	for every $z_0\in\Omega\setminus N$, where $N$ is a null set.
	Then $F(Hu)\geq \psi$ in the viscosity sense in $\Omega$. 
	
	2. Assume $v$ is a semi-concave $\Omega$ satisfying, for almost all $z_0\in\Omega$, for every
	$(p,Q) \in  {\mathcal J}^{2,-} w (x_0)$, if $HQ\in\overline{M(\Gamma, n)}$ then
$F(HQ)\leq \psi$. Then $F(Hv)\leq \psi$ in the viscosity sense $\Omega$.
	\end{Lem}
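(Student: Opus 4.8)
The plan is to run the standard Jensen's-lemma argument of viscosity theory, as in Eyssidieux--Guedj--Zeriahi \cite{EGZ15b}, after recording the two structural facts about $F$ that it needs: \textbf{(a)} if $A\leq B$ are Hermitian matrices and $A\in\overline{M(\Gamma,n)}$, then $B\in\overline{M(\Gamma,n)}$ and $F(B)\geq F(A)$; \textbf{(b)} the linear map $Q\mapsto HQ$ sends real positive semidefinite matrices to positive semidefinite Hermitian matrices. Here (b) holds because a convex quadratic form on $\C^n$ is plurisubharmonic, and (a) holds because $\Gamma_n\subseteq\Gamma$ and $\Gamma$ is a convex cone --- so $\overline{M(\Gamma,n)}$ absorbs positive semidefinite matrices --- together with the monotonicity of $f$ (equivalently \eqref{F is increasing}). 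I write out part~1; part~2 is the same argument applied to $-v$, with a short case split at the outset.

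For part~1 it suffices to prove $F(Hq(z_0))\geq\psi(z_0)$ for every upper test function $q$ of $u$ at a point $z_0\in\Omega$. First I would replace $q$ by $q+|z-z_0|^4$; since the quartic term has vanishing $2$-jet at $z_0$ this leaves $q(z_0)$ and $Hq(z_0)$ unchanged, but now $u-q$ has a strict local maximum at $z_0$. Fix a closed ball $\overline B\Subset\Omega$ around $z_0$ on which $\phi:=u-q$ is semi-convex and attains its maximum over $\overline B$ only at $z_0$. By Jensen's lemma (see \cite{Jen88}, \cite[Lemma~A.3]{CIL92}), for all small $\delta>0$ the set of $x\in\overline B$ at which $y\mapsto\phi(y)+\langle p,y\rangle$ attains its maximum over $\overline B$, for some $|p|\leq\delta$, has positive Lebesgue measure and, by strictness, lies in the interior of $\overline B$. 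Since $u$ is twice differentiable a.e.\ by Alexandrov's theorem, this set is not contained in the union of $N$ with the (null) non-twice-differentiability set of $u$; choose $x_\delta$ in it but outside that union. Then for a suitable $|p_\delta|\leq\delta$ one reads off $Du(x_\delta)=Dq(x_\delta)-p_\delta$ and $D^2u(x_\delta)\leq D^2q(x_\delta)$, with $u$ twice differentiable at $x_\delta$, so that $(Du(x_\delta),D^2u(x_\delta))\in{\mathcal J}^{2,+}u(x_\delta)$.

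Since $x_\delta\notin N$, the hypothesis at $x_\delta$ gives $F(HD^2u(x_\delta))\geq\psi(x_\delta)\geq0$, hence $HD^2u(x_\delta)\in\overline{M(\Gamma,n)}$. Applying (a) and (b) to $0\leq D^2q(x_\delta)-D^2u(x_\delta)$ gives $Hq(x_\delta)\in\overline{M(\Gamma,n)}$ and $F(Hq(x_\delta))\geq F(HD^2u(x_\delta))\geq\psi(x_\delta)$. As $\delta\to0$, strictness of the maximum forces $x_\delta\to z_0$, so $Hq(x_\delta)\to Hq(z_0)$, and closedness of $\overline{M(\Gamma,n)}$ gives $Hq(z_0)\in\overline{M(\Gamma,n)}$. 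If the eigenvalues of $Hq(z_0)$ lie on $\partial\Gamma$, then $F(Hq(z_0))=0\leq\psi(z_0)$; otherwise $Hq(z_0)\in M(\Gamma,n)$, on which $F$ is continuous ($f$ being concave, hence continuous, on the open cone $\Gamma$), and $F(Hq(z_0))=\lim_\delta F(Hq(x_\delta))\geq\liminf_\delta\psi(x_\delta)=\psi(z_0)$. For part~2, let $q$ be a lower test function of $v$ at $z_0$. If $Hq(z_0)\notin\overline{M(\Gamma,n)}$ then $F(Hq(z_0))=-\infty\leq\psi(z_0)$; if $Hq(z_0)\in\overline{M(\Gamma,n)}\setminus M(\Gamma,n)$ its eigenvalues are on $\partial\Gamma$ and $F(Hq(z_0))=0\leq\psi(z_0)$. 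If $Hq(z_0)\in M(\Gamma,n)$, I would repeat the argument with $-v$ in place of $u$: after a quartic perturbation making $q-v$ have a strict maximum at $z_0$ (which does not change $Hq(z_0)$), Jensen's lemma and Alexandrov's theorem produce twice-differentiability points $x_\delta\to z_0$ off the null set with $D^2v(x_\delta)\geq D^2q(x_\delta)$, hence $(Dv(x_\delta),D^2v(x_\delta))\in{\mathcal J}^{2,-}v(x_\delta)$ and $HD^2v(x_\delta)\geq Hq(x_\delta)$; for $\delta$ small $Hq(x_\delta)\in M(\Gamma,n)$, so by (a) $HD^2v(x_\delta)\in\overline{M(\Gamma,n)}$, the hypothesis gives $F(HD^2v(x_\delta))\leq\psi(x_\delta)$, and (a) again gives $F(Hq(x_\delta))\leq\psi(x_\delta)$; letting $\delta\to0$ and using continuity of $F$ on $M(\Gamma,n)$ yields $F(Hq(z_0))\leq\psi(z_0)$.

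The step I expect to be the main obstacle is the measure-theoretic one: producing the approximating points $x_\delta\to z_0$ via Jensen's lemma for semi-convex functions together with Alexandrov's second-differentiability theorem. That is the only non-elementary ingredient; the rest is bookkeeping once (a) and (b) are in place. Relative to the complex Monge--Amp\`ere case of \cite[Lemma~3.2]{EGZ15b}, the skeleton is unchanged, and the only genuinely new verifications are the monotonicity (a) and the harmless degeneracy $F\equiv0$ on $\overline{M(\Gamma,n)}\setminus M(\Gamma,n)$, which takes care of the boundary cases.
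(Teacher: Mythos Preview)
The paper does not actually prove this lemma; it merely states it as ``a special case of \cite[Lemma~3.2]{EGZ15b}'' and moves on. Your argument via Jensen's lemma for semi-convex functions together with Alexandrov's second-differentiability theorem is precisely the standard proof, and is the one carried out in \cite{EGZ15b}; so in that sense your approach coincides with what the paper defers to.

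There is, however, one genuine slip in part~1. In the boundary case you write ``if the eigenvalues of $Hq(z_0)$ lie on $\partial\Gamma$, then $F(Hq(z_0))=0\leq\psi(z_0)$'', but the inequality you need is the reverse, $F(Hq(z_0))\geq\psi(z_0)$; the displayed line is trivially true and proves nothing. (You handled the analogous boundary case correctly in part~2, where the direction \emph{is} $\leq$.) The fix is simple once one uses continuity of $f$ on $\overline{\Gamma}$ (a standing assumption in \cite{DDT}, \cite{EGZ15b}, and implicitly here, without which the very notion of viscosity supersolution is ill-behaved): then $F$ is continuous on all of $\overline{M(\Gamma,n)}$, and your limit argument $F(Hq(x_\delta))\geq\psi(x_\delta)$, $x_\delta\to z_0$, gives $F(Hq(z_0))\geq\psi(z_0)$ directly, with no case split needed. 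Alternatively, still without an explicit continuity hypothesis, you can observe that for every $\epsilon>0$ and $\delta$ small one has $Hq(x_\delta)\leq Hq(z_0)+\epsilon I\in M(\Gamma,n)$, whence by (a) $\psi(x_\delta)\leq F(Hq(x_\delta))\leq F(Hq(z_0)+\epsilon I)$; letting $\delta\to0$ gives $\psi(z_0)\leq F(Hq(z_0)+\epsilon I)$ for all $\epsilon>0$, which already suffices to test at $z_0$ with $q+\epsilon|z-z_0|^2$, and hence with $q$ itself once right-continuity of $\epsilon\mapsto F(Hq(z_0)+\epsilon I)$ at $0$ is available. Either way, the gap is local and easily repaired; the skeleton of your proof is correct.
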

By the same method as in \cite[Proposition 3.6]{EGZ15b}, we get the following result:
\begin{Prop} \label{lem:PartialSol}
	Let $H$ be a continuous function in $(0, T)\times\Omega\times\R$ and let  $h$
	be a non-negative continuous function in $\Omega_T$.
	Assume that $w : \Omega_T \longrightarrow \R$  is a subsolution (resp., supersolution) of the equation
\begin{equation}\label{eq0 prop partial}
	e^{ H (t,z,w)} h(t, z) - F(Hw) = 0,
\end{equation}
	in the viscosity sense in $\Omega_T$. Then, for every $t_0\in (0, T)$, the function 
	$z \longmapsto w (t_0,z)$ 
	is a subsolution (resp., supersolution)
	$e^{ H (t_0, z, \psi)} h(t_0, z) - F(H \psi)= 0$ in $\Omega$. 
\end{Prop}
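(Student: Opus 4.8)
The plan is to reduce to the case where $w$ is Lipschitz in the time variable, to treat that case by a barrier argument that promotes a spatial test function of a time-slice to a genuine parabolic test function of $w$, and then to recover the general case by a stability argument; we may assume $w$ is bounded (as in every application of the statement, and in general after localizing) and, by symmetry, discuss only the subsolution case, the supersolution one being the same with $w_k$ in place of $w^k$ and suprema in place of infima. First I would regularize in time: arguing exactly as in the proof of Lemma~\ref{lem regularization sub}(iv), with the source $e^{\partial_t w+G(t,z,w)}g(z)$ replaced by $e^{H(t,z,w)}h(t,z)$, one gets that $w^k\searrow w$ pointwise (the sequence decreases since the penalty grows while the admissible range shrinks), that each $w^k$ is Lipschitz in $t$ with constant $k$, and that $w^k$ is a viscosity subsolution in $(A/k,T-A/k)\times\Omega$ of $e^{H_k(t,z,w^k)}h_k(t,z)-F(Hw^k)=0$, where $H_k(t,z,r):=\inf_{|s|\le A/k}H(t+s,z,r)$ and $h_k(t,z):=\inf_{|s|\le A/k}h(t+s,z)$ are continuous and satisfy $H_k\nearrow H$, $h_k\nearrow h$ as $k\to\infty$.

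Next, fixing $k$ and $t_0\in(A/k,T-A/k)$, I would prove that $z\mapsto w^k(t_0,z)$ is a subsolution of $e^{H_k(t_0,z,\psi)}h_k(t_0,z)-F(H\psi)=0$ in $\Omega$. Let $\tilde q\in C^2$ be an upper test function for $w^k(t_0,\cdot)$ at $z_0\in\Omega$; replacing $\tilde q$ by $\tilde q+|z-z_0|^4$ leaves $H\tilde q(z_0)$ unchanged and makes the inequality $w^k(t_0,z)<\tilde q(z)$ strict for $0<|z-z_0|\le\rho$. For $\e>0$ put $q_\e(t,z):=\tilde q(z)+\tfrac{k^2}{2\e}(t-t_0)^2+M_\e$, where $M_\e$ is the maximum of $w^k(t,z)-\tilde q(z)-\tfrac{k^2}{2\e}(t-t_0)^2$ over the box $[t_0-\rho,t_0+\rho]\times\overline{B(z_0,\rho)}$; this expression vanishes at $(t_0,z_0)$, so $M_\e\ge0$. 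Using the Lipschitz bound in $t$ and the elementary estimate $k\theta-\tfrac{k^2}{2\e}\theta^2\le\tfrac\e2$, one checks that for small $\e$ the maximum is attained at an interior point $(t_\e,z_\e)$ — whence $q_\e$ is an upper test function for $w^k$ at $(t_\e,z_\e)$ — and that $M_\e\ge0$, upper semicontinuity, and the strict inequality force $(t_\e,z_\e)\to(t_0,z_0)$ and $w^k(t_\e,z_\e)\to w^k(t_0,z_0)$ as $\e\to0$. Since $Hq_\e(t_\e,\cdot)(z_\e)=H\tilde q(z_\e)$, the subsolution inequality at $(t_\e,z_\e)$ reads $F(H\tilde q(z_\e))\ge e^{H_k(t_\e,z_\e,w^k(t_\e,z_\e))}h_k(t_\e,z_\e)$; letting $\e\to0$ and using continuity of $H_k,h_k$, upper semicontinuity of $F$, and closedness of $\overline{M(\Gamma, n)}$ yields $F(H\tilde q(z_0))\ge e^{H_k(t_0,z_0,w^k(t_0,z_0))}h_k(t_0,z_0)$.

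Finally, I would fix $t_0\in(0,T)$, so that $t_0\in(A/k,T-A/k)$ for $k$ large. The slices $w^k(t_0,\cdot)$ decrease to the upper semicontinuous function $w(t_0,\cdot)$, and $H_k\nearrow H$, $h_k\nearrow h$ locally uniformly by Dini's theorem. Given a strict $C^2$ upper test function $q$ for $w(t_0,\cdot)$ at $z_0$, I would extract interior near-maximizers $z_k\to z_0$ of $w^k(t_0,\cdot)-q$ with $w^k(t_0,z_k)\to w(t_0,z_0)$ — routine, since the slices decrease, using $w^k(t_0,\cdot)\ge w(t_0,\cdot)$ and upper semicontinuity — apply the inequality of the previous step to the test function $q$ (up to an additive constant, which does not affect $Hq$) at $z_k$, and let $k\to\infty$, using once more upper semicontinuity of $F$, the local uniform convergence of $H_k,h_k$, and $h\ge0$, to conclude $F(Hq(z_0))\ge e^{H(t_0,z_0,w(t_0,z_0))}h(t_0,z_0)$. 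Thus $z\mapsto w(t_0,z)$ is a subsolution of $e^{H(t_0,z,\psi)}h(t_0,z)-F(H\psi)=0$ in $\Omega$. The supersolution case runs identically, with one extra remark: a limiting test function may satisfy $Hq(z_0)\in\partial M(\Gamma, n)$ while $Hq(z_k)\notin\overline{M(\Gamma, n)}$, but then $F(Hq(z_0))=0\le e^{H(t_0,z_0,w(t_0,z_0))}h(t_0,z_0)$ and the inequality still holds.

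I expect the crux to be the barrier step. Because the equation carries no $\partial_t$-term, a spatial test function of a time-slice need not a priori lift to a parabolic test function of $w$; what rescues the argument is the Lipschitz-in-time regularity gained from the sup-convolution, which lets the quadratic barrier $\tfrac{k^2}{2\e}(t-t_0)^2$ absorb the time dependence and confine the contact set to a neighbourhood of $\{t=t_0\}$. Checking that the maximum of $w^k-\tilde q-\tfrac{k^2}{2\e}(t-t_0)^2$ is genuinely interior, and controlling the location and the value of the maximizer as $\e\to0$, is the only place where a nontrivial estimate is needed.
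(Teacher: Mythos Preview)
Your route is genuinely different from the paper's: the paper takes the full space--time sup-convolution
$w_\epsilon(t,z)=\sup_{(s,\xi)}\bigl\{w(s,\xi)-\tfrac{A}{\epsilon^2}(|t-s|^2+|z-\xi|^2)\bigr\}$,
which makes $w_\epsilon$ semi-convex in $(t,z)$, then invokes Alexandrov's theorem and Lemma~\ref{lem:subpp} to obtain the slice inequality first for a.e.\ $t_0$, and finally extends to all $t_0$ by stability. Your barrier idea avoids Alexandrov's theorem and Lemma~\ref{lem:subpp} entirely and is more elementary.

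There is, however, a gap in your Step~1. The proof of Lemma~\ref{lem regularization sub}(iv) uses, at the step
$G(\hat t,\hat z,\hat q(\hat t,\hat z))\ge G(\hat t,\hat z,q(t_0,z_0))$,
that $G$ is non-decreasing in its last variable. In Proposition~\ref{lem:PartialSol} the function $H$ is only assumed continuous, so ``arguing exactly as in the proof of Lemma~\ref{lem regularization sub}(iv)'' does not give that $w^k$ is a subsolution of $e^{H_k(t,z,w^k)}h_k-F(Hw^k)=0$: tracing through, the value at the contact point shifts by $+k|s_0|\in[0,A]$, and without monotonicity in $r$ this shift cannot be absorbed into $H_k(t,z,r)=\inf_{|s|\le A/k}H(t+s,z,r)$.

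The good news is that the time regularization is unnecessary: your barrier from Step~2 works directly on $w$. Given a strict $C^2$ upper test $\tilde q$ for $w(t_0,\cdot)$ at $z_0$, set
$\Psi_\epsilon(t,z)=w(t,z)-\tilde q(z)-\tfrac{1}{\epsilon}(t-t_0)^2$
on $[t_0-\eta,t_0+\eta]\times\overline{B(z_0,\rho)}$. Upper semicontinuity of $w$ plus compactness of $\{t_0\}\times\partial B(z_0,\rho)$ give $\eta>0$ with $w(t,z)-\tilde q(z)<0$ on $[t_0-\eta,t_0+\eta]\times\partial B(z_0,\rho)$, while on $|t-t_0|=\eta$ the quadratic penalty dominates for small $\epsilon$; hence the maximum $M_\epsilon\ge\Psi_\epsilon(t_0,z_0)=0$ is attained at an interior $(t_\epsilon,z_\epsilon)$. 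Boundedness of $w$ forces $t_\epsilon\to t_0$, and then USC of $w$ together with strictness of $\tilde q$ gives $z_\epsilon\to z_0$, $M_\epsilon\to0$ and $w(t_\epsilon,z_\epsilon)\to w(t_0,z_0)$ --- no Lipschitz bound needed. Applying the subsolution property of $w$ itself at $(t_\epsilon,z_\epsilon)$ with test function $\tilde q(z)+\tfrac{1}{\epsilon}(t-t_0)^2+M_\epsilon$ and letting $\epsilon\to0$ yields
$F(H\tilde q(z_0))\ge e^{H(t_0,z_0,w(t_0,z_0))}h(t_0,z_0)$
directly, with no Step~1 and no Step~3. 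So your core idea is right; drop the detour through $w^k$ and the proof is both correct and shorter than the paper's.
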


\begin{proof} 
	We give the proof for the case where $w$ is a subsolution. The proof for the case of supersolutions is similar. By using approximations, we can also assume that $w$ is bounded.
	
	Let $A>osc_{\Omega_T}w$. For every $\e>0$, $\e<t<T-\e$ and $z\in\Omega(\epsilon):= \{z\in \Omega \ | \ d(z,\partial \Omega)> \e\}$, we define
	by $w_\e$ the sup-convolution:
	$$w_\e(t, z)=\sup\left\{w(s, \xi)-\dfrac{A}{\epsilon^2}\left(|t-s|^2+|z-\xi|^2\right):
	(s, \xi)\in\Omega_T\right\}.$$
	
	Then the function $v := w_\e$ satisfies
	\begin{equation}\label{eq1 prop partial}
		F(Hv) \geq e^{ H_\e (t,z,v)} h_\e,
	\end{equation}
in the viscosity sense in $U(\epsilon):=(\e, T-\e)\times\Omega(\epsilon)$, where
	$$
	h_\e (t,z) := \inf \{ h (t',z') ;  \vert t' - t\vert , \vert z' - z\vert \leq  \e\}
	$$ 
	and 
		$$
	H_\e (t,z) := \inf \{ H (t',z') ;  \vert t' - t\vert , \vert z' - z\vert \leq  \e\}
	$$ 
	
	Since $w_\e$ is semi-convex in $U(\epsilon)$, it follows from Alexandrov's theorem \cite{Ale39} that it is  twice differentiable almost everywhere in $U(\epsilon)$. Then the inequality \eqref{eq1 prop partial} is satisfied pointwise almost everywhere.  Moreover, by Fubini Theorem, for almost all  $t_0 \in (\e,T-\e)$, there exists a null set $E^{t_0} \subset \Omega(\epsilon)$ such that, for every $z_0 \notin E^{t_0}$, the function  $w_\e $ is  twice differentiable at $(t_0,z_0)$.
	By the definition, we have $\mathcal J^2 w_\e (t_0,z_0) = \{(\tau,p,\kappa,Q)\}$ and $\{(p,Q)\} = {\mathcal J}^2 \p (z_0)$, where $\p = w_\e (t_0,\cdot)$. 
	Using the inequality \eqref{eq1 prop partial} at $(t_0,z_0)$, we get
	$$ 
	F(HQ) \geq  e^{ H_\e (t_0,z_0, w_\e(t_0,z_0))} h_\e (t_0,z_0) .
	$$
	
	Hence, for almost all $t_0 \in (0, T)$, the function $\p (z) := w_\e (t_0,z)$
	is  twice differentiable at almost every $z_0 \in \Omega_\e$ and satisfies
	$$
	F(H\p(z_0)) \geq  e^{ H_\e (t_0,z_0, \p (z_0))} h_\e (t_0,z_0).
	$$
	Therefore, it follows from Lemma \ref{lem:subpp} that
	 $F(H\p) \geq e^{ H_\e (t_0,\cdot, \p)} h_\e (t_0,\cdot)$ 
in the viscosity sense in  $\Omega(\epsilon)$.
	Since $h^\e \to h$ and $G^\e \to G$ locally uniformly in $U$, it follows from Lemma \ref{lem inf sup} that
	 $w (t_0,\cdot) = \lim_{\e \to 0} w_\e (t_0,\cdot)$ is a viscosity subsolution for the equation
	$$
F(H\p)=	e^{ H (t_0,\cdot, \p)}  h(t_0, z)
	$$ 
	in $\Omega$. 
	This is true for every $t_0 \in (0, T)\setminus I$, where $I\subset (0, T)$ is a null set.
	Moreover, this holds true for the case  $t_0\in I$ by using approximations  and applying the elliptic version of Lemma  \ref{lem inf sup} (see \cite{CIL92}).
Letting $\epsilon\searrow 0$, we obtain
 	$$F(H w(t_0, z))\geq e^{ H (t_0,z, w(t_0, z))} h(t_0, z),$$
 in the viscosity sense in $\Omega$. for every $t_0\in (0, T)$.
 	
 	This finishes the proof.
\end{proof}
By Lemma \ref{lem:PartialSol} and Proposition \ref{prop vis subharmonic}, we have the following result about
the $\Gamma$-subharmonicity of viscosity subsolutions for the parabolic equation \eqref{PHE def}:
\begin{Cor} \label{SPSH} 
	Assume $u$ is a viscosity subsolution for $F(Hw)=e^{\dt w+G(t, z, w)}g(z)$ in $\Omega_T$.  Then, for every $t_0\in (0, T)$, the function $u(t_0, z)$ is $\Gamma$-sh in
	$\Omega$.
\end{Cor}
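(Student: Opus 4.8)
The plan is to deduce the corollary almost immediately from the machinery already assembled, and the only real work is to put the pieces together in the right order. Observe that Corollary \ref{SPSH} is simply the special case of Proposition \ref{lem:PartialSol} in which we choose the data so that the parabolic equation \eqref{PHE def} becomes an equation of the type $e^{H(t,z,w)}h(t,z)-F(Hw)=0$ with the right-hand side nonnegative. Concretely, I would set $h(t,z):=g(z)$, which is a nonnegative continuous function on $\Omega_T$ by hypothesis, and $H(t,z,r):=\partial_t u(\cdot)+G(t,z,r)$; but of course $\partial_t u$ is not available pointwise for a mere subsolution, so the cleaner route is to invoke Proposition \ref{lem:PartialSol} exactly as stated.

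First I would note that a viscosity subsolution $u$ of $F(Hw)=e^{\partial_t w+G(t,z,w)}g(z)$ in $\Omega_T$ is, by definition, a viscosity subsolution of the equation \eqref{PHE def}, which is precisely of the form \eqref{eq0 prop partial} treated in Proposition \ref{lem:PartialSol} (the presence of the $\partial_t w$ term is built into the definition of viscosity sub/supersolution of the parabolic equation and is handled inside the proof of that proposition via the jets $\mathcal{P}^{2,+}$). Applying Proposition \ref{lem:PartialSol} with $H$ the given function $G$ and $h(t,z)=g(z)\geq 0$, we conclude that for every fixed $t_0\in(0,T)$ the slice $z\longmapsto u(t_0,z)$ is a viscosity subsolution on $\Omega$ of the elliptic equation $e^{H(t_0,z,\psi)}g(z)-F(H\psi)=0$, i.e.
\begin{equation*}
F(H u(t_0,z))\geq e^{G(t_0,z,u(t_0,z))}g(z)
\end{equation*}
in the viscosity sense in $\Omega$.

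Next I would discard the lower bound: since $g\geq 0$ and $G$ is real-valued and continuous, the right-hand side $e^{G(t_0,z,u(t_0,z))}g(z)$ is a nonnegative quantity, hence in particular $F(Hu(t_0,\cdot))\geq 0$ in the viscosity sense in $\Omega$. By Proposition \ref{prop vis subharmonic}, this is equivalent to saying that $u(t_0,\cdot)\in USC(\Omega)$ is $\Gamma$-subharmonic in $\Omega$. Since $t_0\in(0,T)$ was arbitrary, this proves the corollary.

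The only point requiring a little care — and the closest thing to an obstacle — is checking that Proposition \ref{lem:PartialSol} genuinely applies here despite the mild mismatch in notation: in that proposition the nonlinearity in the exponent is written $H(t,z,w)$ with no separate $\partial_t w$ displayed, whereas \eqref{PHE def} exhibits $\partial_t w + G(t,z,w)$. I would remark explicitly that this is not a real discrepancy: the defining inequality for a parabolic subsolution already packages the time-derivative term into the test-function condition (equivalently, into the $\tau$-component of $\mathcal{P}^{2,+}u$ via Proposition \ref{prop def vis}), so \eqref{PHE def} is an instance of \eqref{eq0 prop partial} with the understanding that $w$ is a parabolic subsolution; indeed the proof of Proposition \ref{lem:PartialSol} treats precisely this situation. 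With that observation recorded, the deduction above is complete, and no further computation is needed.
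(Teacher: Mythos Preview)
Your proposal is correct and follows precisely the route the paper takes: the paper does not give a separate proof of Corollary~\ref{SPSH} but simply records it as an immediate consequence of Proposition~\ref{lem:PartialSol} combined with Proposition~\ref{prop vis subharmonic}. Your remark on the apparent $\partial_t w$ mismatch is apt and correctly resolved; indeed for the corollary only the inequality $F(Hu(t_0,\cdot))\geq 0$ is needed, and the nonnegativity of the right-hand side makes the presence or absence of the $\partial_t$-term in the exponent immaterial.
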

\section{Comparison principle and Perron method}\label{sec compa}
\subsection{Comparison principle}
\begin{Lem}\label{lem compa}
	Assume that $f$ satisfies
	\begin{equation}\label{eq f lem compa}
		\lim\limits_{R\to\infty}f(R, R, ...,R)=\infty.
	\end{equation}
	Suppose $u\in USC\cap L^{\infty} ([0, T)\times\overline{\Omega})$ 
	and $v\in LSC\cap L^{\infty} ([0, T)\times\overline{\Omega})$  is, respectively, a viscosity subsolution
	and supersolution for the equation
	\begin{equation}\label{PMA lem compa}
		F(H w)=e^{\dt w+F(t, z, w)}g(z),
	\end{equation}
	in $\Omega_T$ such that $u$ and $v$ are locally Lipschitz in $t$. Then
	\begin{equation}\label{eq0 lem compa}
		\sup\limits_{\Omega_T}(u-v)\leq \sup\limits_{\partial_P\Omega_T}(u-v)_+,
	\end{equation}
where $\partial_P\Omega_T=(\{0\}\times\overline{\Omega})\cup((0, T)\times\partial\Omega)$.
\end{Lem}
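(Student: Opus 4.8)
The plan is to argue by contradiction, running the standard doubling‑of‑variables/parabolic Jensen–Ishii scheme; the three non‑routine inputs are a time perturbation of $u$ making it blow up at $t=T$, a space perturbation forcing the complex Hessian of the subsolution into the \emph{open} set $M(\Gamma,n)$, and the concavity of $f$ to absorb the locus $\{g=0\}$.

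Suppose \eqref{eq0 lem compa} fails, so $M:=\sup_{\Omega_T}(u-v)>\sup_{\partial_P\Omega_T}(u-v)_+=:m\ge 0$. Fix $R_0$ with $\overline\Omega\subset B(0,R_0)$ and, for small $\sigma,\delta>0$, put
\[
\tilde u(t,z):=u(t,z)-\frac{\sigma}{T-t}+\delta\bigl(|z|^2-R_0^2\bigr).
\]
Then $\tilde u\le u$ (so $\sup_{\partial_P\Omega_T}(\tilde u-v)\le m$), $\tilde u\to u$ locally uniformly on $[0,T)\times\overline\Omega$ (so $\sup_{\Omega_T}(\tilde u-v)>\max(m,0)$ for $\sigma,\delta$ small), and $\tilde u(t,z)\to-\infty$ as $t\to T$. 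I would record two facts. (a) Since $H(|z|^2)=I_n\in M(\Gamma,n)$, for every $(\tau,p,Q)\in\overline{\mathcal P}^{2,+}\tilde u(t_0,z_0)$ the triple obtained by subtracting the derivatives of the smooth perturbation lies in $\overline{\mathcal P}^{2,+}u(t_0,z_0)$; as $u$ is a subsolution and $g\ge 0$, the associated complex Hessian $HQ-c_0\delta I_n$ (an absolute $c_0>0$) belongs to $\overline{M(\Gamma,n)}$, whence $HQ=(HQ-c_0\delta I_n)+c_0\delta I_n\in M(\Gamma,n)$ because $\overline\Gamma+\Gamma_n\subseteq\Gamma$ (a consequence of $\Gamma_n\subseteq\Gamma$: dually $\Gamma^*\subseteq\overline{\Gamma_n}$, which is pointed). (b) Since $\partial_t\bigl(-\tfrac{\sigma}{T-t}\bigr)=-\tfrac{\sigma}{(T-t)^2}\le-\tfrac{\sigma}{T^2}$, $F$ is monotone (cf.\ \eqref{F is increasing}) and $G$ is non-decreasing in the last variable, one checks from (a) that $F(H\tilde u)\ge e^{\sigma/T^2}\,e^{\partial_t\tilde u+G(t,z,\tilde u)}g(z)$ in the viscosity sense in $\Omega_T$.

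Now double the space variable: for $\epsilon>0$ let $\Phi_\epsilon(t,\xi,\eta):=\tilde u(t,\xi)-v(t,\eta)-\tfrac1{2\epsilon}|\xi-\eta|^2$ on $[0,T)\times\overline\Omega^2$. Because $\tilde u\to-\infty$ as $t\to T$ and $v$ is bounded, $\Phi_\epsilon$ has a maximum at some $(t_\epsilon,\xi_\epsilon,\eta_\epsilon)$ with $t_\epsilon\le T-c$ for a uniform $c>0$. The standard Crandall–Ishii lemma gives, along a subsequence, $(t_\epsilon,\xi_\epsilon,\eta_\epsilon)\to(t_0,z_0,z_0)$, $\tfrac1{2\epsilon}|\xi_\epsilon-\eta_\epsilon|^2\to 0$, $\tilde u(t_\epsilon,\xi_\epsilon)\to\tilde u(t_0,z_0)$, $v(t_\epsilon,\eta_\epsilon)\to v(t_0,z_0)$, with $(t_0,z_0)$ a maximum point of $\tilde u-v$ of value $>m$; hence $(t_0,z_0)\in\Omega_T$, $\tilde u(t_0,z_0)>v(t_0,z_0)$, and for small $\epsilon$ the point $(t_\epsilon,\xi_\epsilon,\eta_\epsilon)$ is an interior maximum of $\Phi_\epsilon$. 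As $u$ and $v$ are locally Lipschitz in $t$, the structural condition in Theorem~\ref{the maximal} holds for $w=\tilde u$ and $w=-v$ (the bound on $\tau$ being the local Lipschitz constant), so with $\phi=\tfrac1{2\epsilon}|\xi-\eta|^2$ (independent of $t$) and a suitable $\kappa=\kappa(\epsilon)$, Theorem~\ref{the maximal} gives $(\tau_\epsilon,p_\epsilon,Q^+)\in\overline{\mathcal P}^{2,+}\tilde u(t_\epsilon,\xi_\epsilon)$, $(\tau_\epsilon,p_\epsilon,Q^-)\in\overline{\mathcal P}^{2,-}v(t_\epsilon,\eta_\epsilon)$ (same $\tau_\epsilon$, same $p_\epsilon$), with $Q^+\le Q^-$; the Lipschitz bounds also give $|\tau_\epsilon|\le C$, so $\tau_\epsilon\to\tau_0$ along a further subsequence. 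By (a), $HQ^+\in M(\Gamma,n)$; since $Q\mapsto HQ$ is order preserving, $HQ^-\ge HQ^+$ forces $HQ^-\in M(\Gamma,n)$ (again $\overline\Gamma+\Gamma_n\subseteq\Gamma$) and $F(HQ^+)\le F(HQ^-)$. As both Hessians lie in the open set where $F$ is continuous, Proposition~\ref{prop def vis} (passed to closed jets) yields
\[
e^{\sigma/T^2}\,e^{\tau_\epsilon+G(t_\epsilon,\xi_\epsilon,\tilde u(t_\epsilon,\xi_\epsilon))}g(\xi_\epsilon)\ \le\ F(HQ^+)\ \le\ F(HQ^-)\ \le\ e^{\tau_\epsilon+G(t_\epsilon,\eta_\epsilon,v(t_\epsilon,\eta_\epsilon))}g(\eta_\epsilon).
\]

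Finally, cancel $e^{\tau_\epsilon}$ and let $\epsilon\to 0$: by continuity of $G,g$ and the convergences above, $e^{\sigma/T^2}e^{G(t_0,z_0,\tilde u(t_0,z_0))}g(z_0)\le e^{G(t_0,z_0,v(t_0,z_0))}g(z_0)$, and since $\tilde u(t_0,z_0)>v(t_0,z_0)$ and $G$ is non-decreasing this gives $(e^{\sigma/T^2}-1)g(z_0)\le 0$, i.e.\ $g(z_0)=0$. In that case $g(\xi_\epsilon),g(\eta_\epsilon)\to 0$ and $\tau_\epsilon$ is bounded, so the displayed chain forces $0<F(HQ^+)\le e^{\tau_\epsilon+G(t_\epsilon,\eta_\epsilon,v(t_\epsilon,\eta_\epsilon))}g(\eta_\epsilon)\to 0$, i.e.\ $F(HQ^+)\to 0$. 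But writing $\lambda(HQ^+)=\mu_\epsilon+c_0\delta\mathbf 1$ with $\mu_\epsilon=\lambda(HQ^+-c_0\delta I_n)\in\overline\Gamma$, concavity of $f$ and $f\ge0$ give
\[
F(HQ^+)=f\Bigl(\tfrac12(2\mu_\epsilon)+\tfrac12(2c_0\delta\mathbf 1)\Bigr)\ \ge\ \tfrac12 f(2\mu_\epsilon)+\tfrac12 f(2c_0\delta,\dots,2c_0\delta)\ \ge\ \tfrac12 f(2c_0\delta,\dots,2c_0\delta)\ >\ 0,
\]
a fixed positive number ($2c_0\delta\mathbf 1\in\Gamma_n\subseteq\Gamma$), a contradiction. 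This proves \eqref{eq0 lem compa}. The delicate point is fact (a): using the space perturbation $\delta|z|^2$ together with $\overline\Gamma+\Gamma_n\subseteq\Gamma$ to keep the Hessians along the contact sequence in the \emph{open} cone $M(\Gamma,n)$, which is what makes $F$ continuous there (so the super‑jet inequality passes to the limit) and lets the degenerate set $\{g=0\}$ be handled via concavity; the standing hypothesis \eqref{eq f lem compa}, that $F$ is unbounded, is what makes working with such strictly‑in‑the‑cone perturbations legitimate (and is needed for the Perron step afterwards).
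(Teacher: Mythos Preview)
Your argument is correct, and it follows a genuinely different route from the paper's proof at the crucial point where the degeneracy $\{g=0\}$ is handled.

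Both proofs use the same perturbation $-\dfrac{\sigma}{T-t}+\delta(|z|^2-\mathrm{const})$ and the same doubling/Jensen--Ishii machinery, and both exploit the local Lipschitz hypothesis to bound the $\tau$'s. The divergence comes after obtaining $Q^+\le Q^-$ with $HQ^+-\delta I_n\in\overline{M(\Gamma,n)}$. The paper keeps the $\delta/(T-t)^2$ term in $\phi$ (so $\tau_1=\tau_2+\delta/(T-t_N)^2$), bounds $F(HQ_N^-)\le C_2$ via the supersolution inequality and the Lipschitz bound on $\tau_2$, then invokes \eqref{eq f lem compa} to pick $R$ with $F(\,\cdot\,+R\,I)\ge C_2+1$ and uses concavity along the segment $[HQ_N^+-\delta I,\,HQ_N^++R\,I]$ to produce an \emph{additive} gain $F(HQ_N^+)\ge F(HQ_N^+-\delta I)+\dfrac{\delta}{R+\delta}$. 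This constant survives the limit $N\to\infty$ and yields a single contradiction, regardless of whether $g(z_0)$ vanishes.

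You instead absorb the time perturbation into $\tilde u$ (so $\tau_1=\tau_2$), use the resulting multiplicative factor $e^{\sigma/T^2}$ to force $g(z_0)=0$, and then treat that case separately: from $HQ^+-\delta I\in\overline{M(\Gamma,n)}$ and concavity you get the uniform lower bound $F(HQ^+)\ge\tfrac12 f(2\delta,\dots,2\delta)>0$, while the supersolution inequality and $g(\eta_\epsilon)\to 0$ drive $F(HQ_\epsilon^+)\to 0$. This is a clean alternative, and notably your proof \emph{does not use} hypothesis \eqref{eq f lem compa} at all---your closing remark attributing a role to it is inaccurate. So your argument actually proves the lemma without the unboundedness assumption on $f$; that hypothesis is genuinely needed elsewhere in the paper (the construction of $\epsilon$-subbarriers in Proposition~\ref{prop.subbarrier}), but not here. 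The paper's approach buys a unified, case-free argument; yours buys a sharper statement.
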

\begin{proof}
	Let $\delta>0$ be an arbitrary positive constant. Denote
	\begin{center}
		$h(t, z)=u(t, z)-v(t, z)-\dfrac{\delta}{T-t}+\delta(|z|^2-C),$
	\end{center}
	where $C=\sup\limits_{z\in\Omega}|z|^2$.
	We will show that
	\begin{equation}\label{eq1 lem compa}
		\max\limits_{\overline{\Omega_T}}h\leq 
		\max\limits_{\partial_P\Omega_T}h_+.
	\end{equation}
	Assume that \eqref{eq1 lem compa} is false. Then, there exist $(t_0, z_0)\in\Omega_T$ such that
	\begin{center}
		$M:=h(t_0, z_0)=\max\limits_{[0, T)\times\overline{\Omega}}h>
		\max\limits_{\partial_P\Omega_T}h_+.$
	\end{center}
	For each $N>0$, we define
	\begin{center}
		$h_N(t, \xi, \eta)=u(t, \xi)+\delta(|z|^2-C)-v(t, \eta)-\dfrac{\delta}{T-t}-\dfrac{N|\xi-\eta|^2}{2},$
	\end{center}
	and choose $(t_N, \xi_N, \eta_N)\in [0, T)\times\overline{\Omega}^2$ such that
	\begin{center}
		$h_N(t_N, \xi_N, \eta_N)=\max\limits_{[0, T)\times\overline{\Omega}^2}h_N=:M_N.$ 
	\end{center}
	It follows from \cite[Proposition 3.7]{CIL92} that $\lim_{N\to\infty}N|\xi_N-\eta_N|^2=0$ and we can assume
	$\xi_N, \eta_N\rightarrow z_0$, $t_N\rightarrow t_0$ as $N\rightarrow\infty$. In particular, there exists $N_0>0$ such that $(t_N, \xi_N, \eta_N)\in (0, T)\times\Omega^2$ for every $N\geq N_0$.
	
	By the Lipschitz continuity in $t$ of $u$ and $v$, the functions $w_1=u+\delta(|z|^2-C)$ and  $w_2=-v$ satisfy the condition \eqref{Cond} in Theorem \ref{the maximal}. Hence, it follows from Theorem \ref{the maximal} that, for every $N>N_0$, 
	there are $(\tau_{N1}, p_{N1}, Q_N^+)\in \overline{\mathcal{P}}^{2,+}w_1(t_N, \xi_N)$ and
	$(\tau_{N2}, p_{N2}, Q_N^-)\in \overline{\mathcal{P}}^{2,-}v(t_N, \eta_N)$ such that
	\begin{equation}\label{eq2 lem compa}
		\tau_{N1}=\tau_{N2}+\dfrac{\delta}{(T-t_N)^2}, 
	\end{equation}
	and $Q_N^-\geq Q_N^+$ (i.e., $\langle Q_N^+\zeta, \zeta\rangle\geq \langle Q_N^-\zeta, \zeta\rangle$
	for all $\zeta\in\R^{2n}$). In particular, we have
	\begin{equation}\label{eq3 lem compa}
		H Q_N^-\geq H Q_N^+\geq \delta.Id>0,
	\end{equation}
 where the second inequality holds due to Proposition \ref{prop def vis}.
	Moreover, it follows from Proposition \ref{prop def vis} that
	\begin{equation}\label{eq4 lem compa}
		e^ {\tau_{N1} +  G (t_N, \xi_N, u (t_N, \xi_N))} g (\xi_N)  \leq 
		F(HQ_N^+-\delta.Id).
	\end{equation}
	and
	\begin{equation}\label{eq5 lem compa}
		e^{\tau_{N2} +  G (t_N, \eta_N, v (t_N ,\eta_N))} g (\eta_N)  \geq F(H Q_N^-).
	\end{equation}
Since $u$ and $v$ are locally Lipschitz in $t$, there exists $C_1>0$ such that
\begin{equation}\label{eq6 lem compa}
	|\tau_{N1}|, |\tau_{N2}|\leq C_1,
\end{equation}
for every $N\gg 1$. Then, by \eqref{eq5 lem compa} and by the continuity of $G, g$, there exists
$C_2>0$ such that
\begin{equation}\label{eq7 lem compa}
	F(H Q_N^-)\leq C_2,
\end{equation}
for all $N\gg 1$.

By the condition \eqref{eq f lem compa}, there exists $R>0$ such that $F(A)\geq C_2+1$ for every $A\geq R.Id$.
Therefore, by \eqref{eq3 lem compa}, \eqref{eq7 lem compa} and by the concavity $F$,
 we get
\begin{equation}\label{eq8 lem compa}
	F(HQ_N^+)\geq \dfrac{R.F(HQ_N^+-\delta. Id)}{R+\delta}
	+\dfrac{\delta.F(HQ_N^++R.Id)}{R+\delta}
	\geq F(HQ_N^+-\delta. Id)+\dfrac{\delta}{R+\delta}.
\end{equation}
Combining \eqref{eq3 lem compa}, \eqref{eq4 lem compa}, \eqref{eq5 lem compa} and
\eqref{eq8 lem compa}, we have
\begin{equation}\label{eq9 lem compa}
		e^ {\tau_{N1} +  G (t_N, \xi_N, u (t_N, \xi_N))} g (\xi_N)+\dfrac{\delta}{R+\delta}  \leq
		e^{\tau_{N2} +  G (t_N, \eta_N, u (t_N ,\eta_N))} g (\eta_N).
\end{equation}
Combining \eqref{eq2 lem compa}, \eqref{eq6 lem compa} and \eqref{eq9 lem compa}, we obtain
\begin{center}
	$\dfrac{e^{-C_1}\delta}{R+\delta}\leq 
	e^{G (t_N, \eta_N, v (t_N ,\eta_N))} g (\eta_N)
	-e^ {\frac{\delta}{(T-t_N)^2} +  G (t_N, \xi_N, u (t_N, \xi_N))} g (\xi_N).$
\end{center}
Letting $N\to\infty$, we get
\begin{equation}\label{eq10 lem compa}
	\dfrac{e^{-C_1}\delta}{R+\delta}\leq e^{G (t_0, z_0, v (t_0, z_0))} g (z_0)
	-e^ {\frac{\delta}{(T-t_0)^2} +  G (t_0, z_0, u (t_0, z_0))} g (z_0).
\end{equation}
On the other hand, by the condition $M=h(t_0, z_0)>0$, we have $u(t_0, z_0)\geq v(t_0, z_0)$. Since $G$ is
non-decreasing in the last variable, it follows that the RHS of \eqref{eq10 lem compa} is non-positive. This is a contradiction.
Thus, \eqref{eq1 lem compa} is true. Using \eqref{eq1 lem compa} and letting $\delta\to 0$, we obtain \eqref{eq0 lem compa}.
\end{proof}
Using the method as in \cite{EGZ15b} and applying the lemmas
 \ref{lem regularization sub}, \ref{lem regularization super},
\ref{lem compa}, we obtain the following comparison principle:
\begin{The}\label{the compa}
	Assume  $f$ satisfies
\begin{equation}\label{eq f  compa}
	\lim\limits_{R\to\infty}f(R, R, ...,R)=\infty.
\end{equation}
Suppose $u\in USC\cap L^{\infty} ([0, T)\times\overline{\Omega})$ 
and $v\in LSC\cap L^{\infty} ([0, T)\times\overline{\Omega})$ is, respectively, a viscosity subsolution and supersolution of the equation
\begin{equation}\label{PHE compa}
	F(H w)=e^{\dt w+F(t, z, w)}g(z),
\end{equation}
in $\Omega_T$. Then
\begin{center}
	$\sup\limits_{\Omega_T}(u-v)\leq \sup\limits_{\partial_P\Omega_T}(u-v)_+.$
\end{center}
\end{The}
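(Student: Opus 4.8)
The plan is to deduce Theorem~\ref{the compa} from Lemma~\ref{lem compa} by removing the extra hypothesis that $u$ and $v$ are locally Lipschitz in $t$. The mechanism is the time-regularization machinery of Section~\ref{sec reg}: replace $u$ by its sup-convolution-in-time $u^k$ and $v$ by its inf-convolution-in-time $v_k$, which \emph{are} Lipschitz in $t$ by parts (iii) of Lemmas~\ref{lem regularization sub} and \ref{lem regularization super}, apply Lemma~\ref{lem compa} to the pair $(u^k,v_k)$ on a shrunken time-interval, and then let $k\to\infty$ while controlling the error terms coming from the shift in $G$.

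First I would fix $A>\operatorname{osc}u+\operatorname{osc}v$ (so it works for both), and for $k\gg 1$ form $u^k$ and $v_k$ on $(A/k,T-A/k)\times\Omega$. By Lemma~\ref{lem regularization sub}(iv), $u^k$ is a subsolution of $F(Hw)=e^{\dt w+G_k(t,z,w)}g(z)$ with $G_k(t,z,r)=\inf_{|s|\le A/k}G(t+s,z,r)$, and by Lemma~\ref{lem regularization super}(iv), $v_k$ is a supersolution of $F(Hw)=e^{\dt w+G^k(t,z,w)}g(z)$ with $G^k(t,z,r)=\sup_{|s|\le A/k}G(t+s,z,r)$. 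These are two different equations, so Lemma~\ref{lem compa} does not apply verbatim; the standard fix is to note $G_k\le G\le G^k$ and that $G^k-G_k\to 0$ locally uniformly as $k\to\infty$ by uniform continuity of $G$ on compacta. One runs the proof of Lemma~\ref{lem compa} essentially unchanged with $u^k,v_k$ in place of $u,v$: the only place the equation enters is in the two viscosity inequalities \eqref{eq4 lem compa}–\eqref{eq5 lem compa}, which now read with $G_k$ on the sub-side and $G^k$ on the super-side; the concavity/coercivity argument \eqref{eq8 lem compa} is untouched, and in \eqref{eq10 lem compa} the right-hand side becomes $e^{G^k(t_0,z_0,v(t_0,z_0))}g(z_0)-e^{\delta/(T-t_0)^2+G_k(t_0,z_0,u(t_0,z_0))}g(z_0)$, which is still $\le e^{G(t_0,z_0,v)}g(z_0)-e^{G(t_0,z_0,u)}g(z_0)+o(1)\le o(1)$ at a positive maximum where $u(t_0,z_0)\ge v(t_0,z_0)$ — contradiction once $k$ is large. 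Alternatively, and more cleanly, one observes that $u^k$ is \emph{also} a subsolution of the $G^k$-equation (since the exponent only gets larger, the inequality $F(Hq)\ge e^{\dt q+G_k}g$ implies $F(Hq)\ge e^{\dt q+G^k}g$ is \emph{false}) — so this shortcut does not work, and one really does carry the $G_k$ vs.\ $G^k$ discrepancy through the estimate as above.

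Having obtained, for each $k\gg 1$,
\begin{equation*}
	\sup_{(A/k,\,T-A/k)\times\Omega}(u^k-v_k)\ \le\ \sup_{\partial_P((A/k,\,T-A/k)\times\Omega)}(u^k-v_k)_+ + \varepsilon_k,
\end{equation*}
with $\varepsilon_k\to 0$, I would let $k\to\infty$. By Lemma~\ref{lem regularization sub}(ii) and Lemma~\ref{lem regularization super}(ii), $u^k\downarrow u$ and $v_k\uparrow v$ pointwise (indeed $u\le u^k\le\sup_{|s|\le A/k}u(t+s,\cdot)$ and the $\limsup$ in $s$ of an USC function recovers $u$; symmetrically for $v$), so $u^k-v_k\to u-v$. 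On the left side this gives $\sup_{\Omega_T}(u-v)$ in the limit (exhausting $\Omega_T$ by the increasing family of slabs). On the right side one must check the boundary term converges to $\sup_{\partial_P\Omega_T}(u-v)_+$; this uses the upper semicontinuity of $u$ up to $t=0$ and the definition of subsolution of the Cauchy–Dirichlet data — but here the theorem statement only assumes $u\in USC([0,T)\times\overline\Omega)$, $v\in LSC([0,T)\times\overline\Omega)$ with no boundary data, so the right side is literally $\sup_{\partial_P\Omega_T}(u-v)_+$ of the original $u,v$, and the point is that the $k$-th slab has its parabolic boundary at time $A/k\to 0^+$ and on $\partial\Omega$, where $u^k\le\sup_{|s|\le A/k}u$ and $v_k\ge\inf_{|s|\le A/k}v$, so a standard semicontinuity argument bounds $\limsup_k\sup(u^k-v_k)_+$ on that boundary by $\sup_{\partial_P\Omega_T}(u-v)_+$.

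The main obstacle is the bookkeeping around the two \emph{different} regularized equations: one cannot simply quote Lemma~\ref{lem compa} as a black box, but must re-enter its proof to verify that replacing $G$ by $G_k$ on the subsolution side and by $G^k$ on the supersolution side only perturbs the final inequality \eqref{eq10 lem compa} by a quantity that vanishes as $k\to\infty$ — crucially, the coercivity constant $R$ in \eqref{eq8 lem compa} depends only on the a~priori bound $C_2$ on $F(HQ_N^-)$, which in turn depends only on $\|u\|_\infty$, $\|v\|_\infty$, $\|g\|_\infty$, $\sup|G|$ over the relevant compact set and the Lipschitz-in-$t$ constants of $u^k,v_k$ — and the latter can be taken uniform in $k$ only on a fixed compact subinterval $[\sigma,T-\sigma]$, which is why one proves the estimate on slabs and exhausts at the end. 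A secondary, routine obstacle is confirming the convergence of the parabolic-boundary supremum; this is handled exactly as in \cite{EGZ15b} and uses nothing beyond semicontinuity.
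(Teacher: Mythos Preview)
Your overall strategy---regularize in time via $u^k,v_k$, invoke the Lipschitz-in-$t$ comparison Lemma~\ref{lem compa}, and let $k\to\infty$---is the same as the paper's. The execution, however, differs in one key respect. You correctly observe that $u^k$ and $v_k$ are a sub/supersolution pair for \emph{different} equations (with $G_k$ and $G^k$ respectively), and you propose to handle this by re-entering the proof of Lemma~\ref{lem compa} and tracking the $G_k$--$G^k$ discrepancy through the final inequality~\eqref{eq10 lem compa}. This can be made to work, but the paper avoids it entirely with a cleaner device: given $\epsilon>0$, choose $\delta$ so small that $|G(t_1,z,r)-G(t_2,z,r)|<\epsilon$ whenever $|t_1-t_2|<2\delta$, then for $k>2A/\delta$ the functions $u^k-\epsilon t$ and $v_k+\epsilon t$ become, by monotonicity of $G$ in $r$, a subsolution and supersolution of the \emph{same} equation $F(Hw)=e^{\partial_t w+G(t,z,w)}g(z)$. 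Lemma~\ref{lem compa} then applies as a black box, with no need to reopen its proof. The $\pm\epsilon t$ shift is exactly the ``shortcut'' you searched for; you looked for it in the wrong direction (trying to make $u^k$ a subsolution of the $G^k$-equation rather than of the $G$-equation).

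One remark on your version: your claim that ``the Lipschitz-in-$t$ constants of $u^k,v_k$ can be taken uniform in $k$ on a fixed compact subinterval $[\sigma,T-\sigma]$'' is incorrect---by Lemma~\ref{lem regularization sub}(iii) the Lipschitz constant of $u^k$ is exactly $k$, everywhere, and blows up. Fortunately this does not damage your argument: in the contradiction at~\eqref{eq10 lem compa} the constant $C_1$ (and hence $R$) only enters the \emph{left}-hand side $e^{-C_1}\delta/(R+\delta)$, which merely needs to be strictly positive; what forces the contradiction is that the right-hand side is $\le 0$ once $u\ge v$ at the maximum and the $G_k$--$G^k$ gap is dominated by $\delta/(T-t_0)^2$. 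So your route still closes, but the reasoning you gave for why is off. The paper's $\pm\epsilon t$ trick sidesteps this bookkeeping altogether, and also packages the parabolic-boundary control more transparently: one fixes $\epsilon$ and $S<T$ first, uses upper semicontinuity of $(t_1,t_2,z)\mapsto u(t_1,z)-v(t_2,z)$ on the compact set $[0,S+\delta]^2\times\overline\Omega$ to get a uniform $\delta$, and only then chooses $k$.
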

\begin{proof}
	Without loss of generality, we can assume that $u\leq v$ in $\partial_P\Omega_T$.
	Let $\epsilon>0$ and $0<S<T$. Since $u(t_1, z)-v(t_2, z)$ is upper semi-continuous in
	$[0, T)\times [0, T)\times\overline{\Omega}$, there exists $0<\delta<\min\{S/2, T-S\}$ such that
	\begin{equation}\label{eq1 the compa}
		u(t_1, z)-v(t_2, z)\leq \sup\limits_{\partial_P\Omega_T}(u-v)_++\epsilon,
	\end{equation}
for every $(t_1, z), t_2, z)\in\Omega_{2\delta}\cup\partial_P\Omega_{S+\delta}$
with $|t_1-t_2|<2\delta$. Denote $A=2\sup_{\Omega_T}(|u|+|v|)+1$. Since $G$ is continuous, we can choose $\delta$ such that
\begin{equation}\label{eq2 the compa}
	|G(t_1, z, r)-G(t_2, z, r)|<\epsilon,
\end{equation}
for every $(t_1, t_2, z, r)\in [0, T]\times [0, T]\times\overline{\Omega}\times [-A, A]$
with $|t_1-t_2|<2\delta$.
 
 For all $k>2A/\delta$, we define
$$u^k(t, z)=\sup\{u(t+s, z)-k\left| s\right| : \left| s\right|  \leq \dfrac{A}{k}\},$$
and
$$v_k(t, z)=\inf\{v(t+s, z)+k\left| s\right| : \left| s\right|  \leq \dfrac{A}{k}\},$$
for every $(t, z) \in ( \delta, S) \times\Omega$. By Lemma \ref{lem regularization sub}, Lemma
\ref{lem regularization super} and by \eqref{eq2 the compa}, for each $k>2A/\delta$,
we have $u^k(t, z)-\epsilon t$ and $v_k+\epsilon t$, is, respectively, a subsolution and a supersolution
for the equation
$$F(H w)=e^{\dt w+G(t, z, w)}g(z),$$
in $( \delta, S) \times\Omega$. Moreover, by \eqref{eq1 the compa}, we have
$$(u^k(t, z)-\epsilon t)-(v_k+\epsilon t)\leq (2S+1)\epsilon,$$
in $\partial_P(( \delta, S) \times\Omega)$. Therefore, it follows from Lemma \ref{lem compa} that
$$(u^k(t, z)-\epsilon t)\leq (v_k+\epsilon t)+(2S+1)\epsilon,$$
in $( \delta, S) \times\Omega$. Letting $k\to\infty$, we get
$$(u(t, z)-\epsilon t)\leq (v+\epsilon t)+(2S+1)\epsilon,$$
in $( \delta, S) \times\Omega$. Letting $\epsilon\searrow 0$ and $S\nearrow T$, we obtain $u\leq v$
in $\Omega_T$.

The proof is completed.
\end{proof}
\subsection{Perron method}
A function $u\in USC([0, T)\times\bar{\Omega})$ is called $\epsilon$-subbarrier for
\eqref{PHE dirichlet} if $u$ is subsolution to $F(Hw)=e^{\dt w+G(t,z,w)}g(z)$
in the viscosity sense such that $u_0-\epsilon\leq u_*\leq u\leq u_0$ in 
$\{0\}\times\bar{\Omega}$ and $\varphi-\epsilon\leq u_*\leq u\leq \varphi$ in
$[0, T)\times\partial\Omega.$

Similar, a function $v\in LSC([0, T)\times\bar{\Omega})$ is called $\epsilon$-superbarrier for
\eqref{PHE dirichlet} if $v$ is supersolution to $F(Hw)=e^{\dt w+G(t,z,w)}g(z)$
in the viscosity sense such that $u_0+\epsilon\geq v^*\geq v\geq u_0$ in 
$\{0\}\times\bar{\Omega}$ and $\varphi+\epsilon\geq v^*\geq v\geq \varphi$ in
$[0, T)\times\partial\Omega.$

By the Perron method, we have the following lemma:
 \begin{Lem}\label{lem perron}
 	Assume that for every $\epsilon>0$, the Cauchy-Dirichlet problem \eqref{PHE dirichlet} has
 	a continuous $\epsilon$-superbarrier which is Lipschitz in $t$
 	and a continuous $\epsilon$-subbarrier. 
 	Denote by $S$ the family of all continuous subsolutions  to	\eqref{PHE dirichlet}.
 	Then $\Phi_S=\sup\{v: v\in S\}$ is a
 	discontinuous viscosity solution to \eqref{PHE dirichlet}, i.e.,
 	$(\Phi_S)^*$ is a subsolution and $(\Phi_S)_*$ is a supersolution.
 \end{Lem}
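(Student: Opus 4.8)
The plan is to follow the standard Perron machinery adapted to the parabolic setting, using the barriers to pin down boundary behaviour and the comparison results of Section~\ref{sec compa} only implicitly (through Lemma~\ref{lem inf sup} and the definition of sub/supersolution). First I would verify that the family $S$ is nonempty and that $\Phi_S$ is well-defined and bounded: the continuous $\epsilon$-subbarrier supplied by hypothesis is a member of $S$, so $S\neq\emptyset$; on the other hand any continuous $\epsilon$-superbarrier $v$ that is Lipschitz in $t$ dominates every $w\in S$ on the parabolic boundary $\partial_P\Omega_T$, and since $w$ is a subsolution and $v$ a supersolution of the same equation, Theorem~\ref{the compa} (after noting that $w\le\varphi\le v$ and $w(0,\cdot)\le u_0\le v(0,\cdot)$) gives $w\le v$ throughout $\Omega_T$. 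Hence $\Phi_S\le v<+\infty$, and $\Phi_S$ is bounded above; boundedness below comes from a single subbarrier. It follows from part~1 of Lemma~\ref{lem inf sup} that $(\Phi_S)^*$ is a subsolution of $F(Hw)=e^{\dt w+G(t,z,w)}g(z)$ in $\Omega_T$.

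Next I would check the boundary inequalities for $(\Phi_S)^*$ and $(\Phi_S)_*$. Fix $\epsilon>0$ and let $u_\epsilon$ be the continuous $\epsilon$-subbarrier and $v_\epsilon$ the continuous $\epsilon$-superbarrier. On $\{0\}\times\overline\Omega$ and on $[0,T)\times\partial\Omega$ every element of $S$ lies below $u_0$ (resp.\ $\varphi$), so $(\Phi_S)^*\le u_0$ on $\{0\}\times\overline\Omega$ and $(\Phi_S)^*\le\varphi$ on $[0,T)\times\partial\Omega$; this shows $(\Phi_S)^*$ is a subsolution of the Cauchy--Dirichlet problem. For the lower envelope, $u_\epsilon\in S$ gives $\Phi_S\ge u_\epsilon$, hence $(\Phi_S)_*\ge (u_\epsilon)_*\ge u_0-\epsilon$ on $\{0\}\times\overline\Omega$ and $(\Phi_S)_*\ge\varphi-\epsilon$ on $[0,T)\times\partial\Omega$; letting $\epsilon\searrow0$ (the family of subbarriers being at our disposal for every $\epsilon$, and $\Phi_S$ being independent of $\epsilon$) yields $(\Phi_S)_*\ge u_0$ on $\{0\}\times\overline\Omega$ and $(\Phi_S)_*\ge\varphi$ on $[0,T)\times\partial\Omega$. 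Symmetrically, comparison of any $w\in S$ with $v_\epsilon$ (again via Theorem~\ref{the compa}) gives $\Phi_S\le v_\epsilon$, so $(\Phi_S)^*\le v_\epsilon\le u_0+\epsilon$ on $\{0\}\times\overline\Omega$ and $\le\varphi+\epsilon$ on $[0,T)\times\partial\Omega$; combined with $(\Phi_S)^*\le u_0,\varphi$ this also forces $(\Phi_S)_*=(\Phi_S)^*$ on $\partial_P\Omega_T$, equal to $u_0$ (resp.\ $\varphi$) there.

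The heart of the argument is the classical Perron ``bump'' construction showing $(\Phi_S)_*$ is a supersolution in $\Omega_T$. Suppose not: then there is an interior point $(t_0,z_0)\in\Omega_T$ and a lower test function $q$ for $(\Phi_S)_*$ at $(t_0,z_0)$ violating the supersolution inequality, i.e.\ $F(Hq(t_0,z))|_{z=z_0}>e^{\dt q(t_0,z_0)+G(t_0,z_0,(\Phi_S)_*(t_0,z_0))}g(z_0)$. By continuity of $F\circ H$, $G$ and $g$ and the strictness of this inequality, $q+\theta$ is still a subsolution near $(t_0,z_0)$ for small $\theta>0$, and since $(\Phi_S)_*(t_0,z_0)$ is the liminf of $\Phi_S$ there we can find $w\in S$ and a point near $(t_0,z_0)$ where $w<q+\theta/2$ while $q+\theta$ lies strictly below $\Phi_S$ on the boundary of a small neighbourhood; then $\widetilde w:=\max\{w,\,q+\theta\}$ on that neighbourhood and $\widetilde w:=w$ outside is, by the max-of-subsolutions Lemma and because the two definitions agree near the neighbourhood boundary, a continuous subsolution of the Cauchy--Dirichlet problem strictly exceeding $w$ somewhere --- contradicting the maximality of $\Phi_S$. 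The main obstacle is precisely making this local surgery rigorous in the parabolic $C^{(1,2)}$ setting: one must choose the neighbourhood (a space-time cylinder) and the size of the bump so that the glued function remains upper semicontinuous, stays in $S$ (in particular respects the boundary and initial constraints, which is why the bump must be performed at an interior point $t_0\in(0,T)$), and genuinely increases the supremum; this is where the continuity of the data in $t$ and the locally-Lipschitz-in-$t$ regularity of the superbarrier (needed for the comparison step bounding $\Phi_S$) are used. Once this is done, $(\Phi_S)_*$ is a supersolution of~\eqref{PHE dirichlet} and $(\Phi_S)^*$ a subsolution, which is the assertion.
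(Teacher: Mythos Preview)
Your proposal follows the same Perron strategy as the paper, and the boundedness and boundary analysis are essentially correct. The one place where your sketch needs tightening is the bump construction itself, on two points.

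First, taking simply $q+\theta$ as the local subsolution will not work: since $q$ touches $(\Phi_S)_*$ from below at $(t_0,z_0)$ with equality, adding a constant $\theta$ gives no reason for $q+\theta<\Phi_S$ on the boundary of any neighbourhood. The paper addresses this by replacing $q+\theta$ with
\[
\tilde p(t,z) = p(t,z) - \delta\bigl(|t-t_0| + |z-z_0|^2\bigr) + \delta_1,
\qquad 0<\delta_1\ll\delta,
\]
so that the bump decays away from $(t_0,z_0)$; the term $-\delta|z-z_0|^2$ is admissible because the violated supersolution inequality forces $Hp(t_0,z_0)\in M(\Gamma,n)$ (strict interior of the cone), and the $|t-t_0|$ piece is realised as the minimum of two smooth classical subsolutions $p_\delta^\pm = p - \delta|z-z_0|^2 \mp \delta(t-t_0)$.

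Second, even once $\tilde p<\Phi_S-\delta_2$ on the neighbourhood boundary, gluing with a \emph{single} $w\in S$ requires $w>\tilde p$ there, which does not follow from the pointwise inequality $\Phi_S>\tilde p$ (since $\Phi_S$ is a supremum, not a member of $S$). The paper uses compactness of the neighbourhood boundary: for each boundary point $a$ pick $u_a\in S$ with $u_a(a)>\tilde p(a)+\delta_2/2$, extract a finite subcover, set $u=\max_j u_{a_j}\in S$, and then glue $\tilde u = \max\{u,\tilde p\}$ inside the neighbourhood and $\tilde u=u$ outside. This produces a genuinely continuous element of $S$ strictly exceeding $\Phi_S$ at $(t_0,z_0)$.

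With these two refinements your argument coincides with the paper's.
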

 \begin{proof} 
 	We use the same arguments as in the proof of \cite[Lemma 2.12]{DLT}.
 	By the existence of $\epsilon$-subbarriers and $\epsilon$-superbarriers,
 	and by using Theorem \ref{the compa}, we have
 	$\Phi_S$ satisfies the boundary condition and initial condition of \eqref{PHE dirichlet}.
 	Moreover, the set of
 	continuous points of $\Phi_S$ contains $\partial_P\Omega_T$. Hence, $\Phi_S$ is a 
 	discontinuous viscosity solution to the Cauchy-Dirichlet problem \eqref{PHE dirichlet} iff it is a 
 	discontinuous viscosity solution to the equation
 	\begin{equation}\label{eq1 perron}
 		F(Hw)=e^{\dt w+G(t,z,w)}g(z).
 	\end{equation}
 	By Lemma \ref{lem inf sup},
 	we have $(\Phi_S)^*$ is a subsolution to \eqref{eq1 perron}. Then, it remains to show
 	that $(\Phi_S)_*(=\Phi_S)$ is a supersolution to \eqref{eq1 perron}.
 	
 	Assume that $\Phi_S$ is not a supersolution to \eqref{eq1 perron}. Then, there exist a point
 	$(t_0, z_0)\in (0, T)\times\Omega$, an open neighborhood $I\times U\Subset (0, T)\times\Omega$ of 
 	$(t_0, z_0)$ and a $C^{(1, 2)}$ function $p: I\times U\rightarrow\R$ such that
 	$(\Phi_S-p)(t_0, z_0)=\min\limits_{I\times U} (\Phi_S-p)$, $Hp(t_0, z_0)\in M(\Gamma, n)$  and
 	$F(Hp(t_0, z))\mid_{z=z_0}>e^{\partial_{t}p(t_0, z_0)+G(t_0, z_0, p(t_0, z_0))}g (z_0)$.
 	
 	Let $0<\delta\ll 1$ such that, in a neighborhood $I_{\delta}\times U_{\delta}\subset I\times U$
 	of   $(t_0, z_0)$, the functions $$p_{\delta}^{\pm}=p-\delta |z-z_0|^2\mp\delta (t-t_0)$$
 	are $\Gamma$-subharmonic in $z$ and
 	$$F(Hp_{\delta}^\pm)>e^{\partial_{t}p_{\delta}^\pm+G(t, z, p_{\delta}^\pm(t, z))}g(z).$$ Let $0<\delta_1\ll 1$ such that 
 	$$\delta_2:=-\delta_1+\delta\min\limits_{\partial (I_{\delta}\times U_{\delta})}
 	(|t-t_0|+|z-z_0|^2)>0.$$ We define
 	\begin{center}
 		$\tilde{p}:=\min\{p_{\delta}^+, p_{\delta}^-\}+\delta_1=
 		p-\delta(|t-t_0|+|z-z_0|^2)+\delta_1.$
 	\end{center}
 	Then $\tilde{p}$ is a continuous subsolution to \eqref{eq1 perron}
 	in $I_{\delta}\times U_{\delta}$
 	such that $\tilde{p}\leq \Phi_S-\delta_2$ on $\partial (I_{\delta}\times U_{\delta})$
 	and  $\tilde{p}(t_0, z_0)>\Phi_S (t_0, z_0)$.
 	
 	By the definition of $\Phi_S$, for each $a\in \partial(I_{\delta}\times U_{\delta})$,
 	there exists  $u_a\in S$ such that 
 	$$u_a(a)>\Phi_S(a)-\frac{\delta_2}{2}\geq \tilde{p}(a)+\dfrac{\delta_2}{2}.$$
 	By the continuity of $u_a$ and $\tilde{p}$, there exists a neighborhood $V_a$ of $a$
 	such that $u_a>\tilde{p}-\delta_2/2$ in $V_a$. Since 
 	$\partial(I_{\delta}\times U_{\delta})$ is compact, there exists $a_1,..., a_m\in
 	\partial(I_{\delta}\times U_{\delta})$ such that $\partial(I_{\delta}\times U_{\delta})$
 	is covered by $\{V_{a_j}\}_{j=1}^m$. Define
 	\begin{center}
 		$u=\max\{u_{a_1},..., u_{a_m}\}.$
 	\end{center}
 	Then $u\in S$ and $u>\tilde{p}+\dfrac{\delta_2}{2}$ near
 	$\partial(I_{\delta}\times U_{\delta})$. Define
 	\begin{center}
 		$\tilde{u}(t, z)=\begin{cases}
 			u(t,z)\qquad\mbox{if}\quad (t, z)\notin I_{\delta}\times U_{\delta},\\
 			\max\{u(t, z), \tilde{p}(t, z)\}\qquad\mbox{if}\quad (t, z)\in I_{\delta}\times U_{\delta}.
 		\end{cases}$
 	\end{center}
 	Then $\tilde{u}\in S$ and $\tilde{u}(t_0, z_0)\geq \tilde{p}(t_0, z_0)>\Phi_S(t_0, z_0)$.
 	We get a contradiction. 
 	
 	Thus $\Phi_S$ is a supersolution to \eqref{eq1 perron}.
 	
 	The proof is completed.
 \end{proof}
\section{The existence and uniqueness of solution}\label{sec exist}
In this section, we use the comparison principle (Theorem \ref{the compa}) and the Perron method (Lemma \ref{lem perron}) to prove
the existence and uniqueness of solution to the Cauchy-Dirichlet problem \eqref{PHE dirichlet}.
In order to use Lemma \ref{lem perron}, we first show the existence of $\epsilon$-sub/super-barriers.
\subsection{The construction of $\epsilon$-sub/super-barriers}
\begin{Prop}\label{prop.subbarrier}
	Assume $\Omega$ is strictly $\Gamma$-pseudoconvex and $f$ satisfies
	\begin{equation}\label{eq0 subbarrier}
		\lim\limits_{R\to\infty}f(R,...,R)=\infty.
	\end{equation}
 Then, for every  $\epsilon>0$,
 	there exists a continuous $\epsilon$-subbarrier for \eqref{PHE dirichlet}
 which is Lipschitz in $t$. 
\end{Prop}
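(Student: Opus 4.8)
The plan is to construct $u$ as a maximum of two objects: a single ``initial'' subsolution tuned to $u_0$, and the upper envelope of a family of ``boundary'' subsolutions tuned to $\varphi$. Fix once and for all a strictly $\Gamma$-subharmonic defining function $\rho\in C^{\infty}(\overline\Omega)$ for $\Omega$; by compactness of $\overline\Omega$ there is $\epsilon_0>0$ with $H\rho(z)-\epsilon_0\,\mathrm{Id}\in\overline{M(\Gamma,n)}$ for every $z\in\overline\Omega$, and we put $C_0=-\min_{\overline\Omega}\rho>0$, so $\rho+C_0\ge 0$. Two elementary facts will be used repeatedly. \emph{(a) Monotonicity from concavity.} If $a\in\Gamma$ and $w\in\overline\Gamma$ then $\lambda\mapsto f(a+\lambda w)$ is concave and nonnegative on $[0,\infty)$, hence nondecreasing, so $f(a+w)\ge f(a)$; likewise $F(M+N)\ge F(M)$ whenever $M,N\in\overline{M(\Gamma,n)}$. \emph{(b) A priori boundedness.} Since $\varphi$ is continuous on the compact set $[0,T]\times\partial\Omega$ it is bounded and uniformly continuous, and $g$ is bounded; each barrier below will satisfy $\text{barrier}\le\max\{u_0,\varphi\}$, so its values lie in a fixed bounded interval independent of the free constants, and — as $G$ is non-decreasing in its last slot — it suffices to arrange $F(H(\text{barrier}))\ge e^{\partial_t(\text{barrier})+C_\star}g$ for a fixed constant $C_\star$.

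\emph{The initial piece.} Set
$$u_{\mathrm{init}}(t,z)=u_0(z)-\tfrac{\epsilon}{2}+\delta'\bigl(\rho(z)+C_0\bigr)-Mt,$$
with $\delta'>0$ small, $\delta'C_0\le\epsilon/2$, and $M>0$ large. Adding the smooth strictly $\Gamma$-subharmonic term $\delta'\rho$ to the $\Gamma$-subharmonic $u_0$ gives, via Proposition~\ref{prop vis subharmonic} and fact (a) applied with $a=\delta'\epsilon_0(1,\dots,1)$ and $w=\delta'\bigl(\lambda(H\rho)-\epsilon_0(1,\dots,1)\bigr)\in\overline\Gamma$, the bound $F(Hu_{\mathrm{init}})\ge f(\delta'\epsilon_0,\dots,\delta'\epsilon_0)=:c_1>0$ in the viscosity sense. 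Since $\partial_tu_{\mathrm{init}}=-M$, the parabolic right-hand side is at most $e^{-M+C_\star}g\le c_1$ for $M$ large, so $u_{\mathrm{init}}$ is a viscosity subsolution of \eqref{PHE def} in $\Omega_T$, and at $t=0$ one has $u_0-\epsilon\le u_{\mathrm{init}}(0,\cdot)\le u_0$ on $\overline\Omega$. On $\partial\Omega$, where $\rho=0$ and $u_0=\varphi(0,\cdot)$, we get $u_{\mathrm{init}}(t,z)=\varphi(0,z)-\tfrac{\epsilon}{2}+\delta'C_0-Mt\le\varphi(t,z)$ once $M$ is large: the uniform time–modulus of continuity of $\varphi$ tends to $0$ as $t\to0$ while $\varphi$ is bounded, so $\varphi(0,z)-\varphi(t,z)\le\tfrac{\epsilon}{2}-\delta'C_0+Mt$ for all $t\in[0,T)$.

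\emph{The boundary pieces.} Let $\widehat\varphi$ be the inf-convolution of $\varphi$ in the time variable, chosen (standard) so that $\varphi-\tfrac{\epsilon}{8}\le\widehat\varphi\le\varphi$ on $[0,T]\times\partial\Omega$, with Lipschitz constant $L$ in $t$ and the same $z$–modulus of continuity $\omega$ as $\varphi$. For $\zeta=(s,w)\in[0,T)\times\partial\Omega$ put
$$u_\zeta(t,z)=\widehat\varphi(s,w)-\tfrac{\epsilon}{2}+A\rho(z)-B|z-w|^2-K|t-s|,$$
and choose $K\ge L$ large, then $B$ large, then $A$ large enough that $A\,H\rho-B\,\mathrm{Id}$ has all eigenvalues $\ge R$, where $R$ is so large that $f(R,\dots,R)\ge e^{K+C_\star}g$ — this is the one place where the hypothesis $\lim_{R\to\infty}f(R,\dots,R)=\infty$ enters. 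One then checks: (i) $u_\zeta$ is a viscosity subsolution of \eqref{PHE def} in $\Omega_T$ (at the corner $t=s$ any $C^{1,2}$ upper test function has $|\partial_tq|\le K$, which is harmless since $F(Hu_\zeta)\ge f(R,\dots,R)$ dominates the right-hand side); (ii) $u_\zeta\le u_0$ on $\{0\}\times\overline\Omega$ (away from $\partial\Omega$ use the very negative $A\rho$ term, away from $w$ the $-B|z-w|^2$ term, and near $w$ the continuity of $u_0,\varphi$ together with $-K|t-s|$ when $s$ is not small); (iii) $u_\zeta\le\varphi$ on $[0,T)\times\partial\Omega$ (compare against the valid lower bound $\varphi(t,z)\ge\widehat\varphi(s,w)-L|t-s|-\omega(|z-w|)$ and absorb the remainder into $-B|z-w|^2+(K-L)|t-s|$); (iv) $u_\zeta(\zeta)=\widehat\varphi(s,w)-\tfrac{\epsilon}{2}\ge\varphi(s,w)-\epsilon$. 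The family $\{u_\zeta\}$ is equi-Lipschitz in $(t,z)$ and uniformly bounded, so $U:=\sup_\zeta u_\zeta$ is finite and continuous, a viscosity subsolution of \eqref{PHE def} in $\Omega_T$ by Lemma~\ref{lem inf sup}, with $U\le u_0$ on $\{0\}\times\overline\Omega$, $U\le\varphi$ on $[0,T)\times\partial\Omega$, and (taking $\zeta$ equal to the given boundary point) $U\ge\varphi-\epsilon$ on $[0,T)\times\partial\Omega$.

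Finally set $u:=\max\{u_{\mathrm{init}},U\}$: it is continuous on $[0,T)\times\overline\Omega$, Lipschitz in $t$ with constant $\max\{M,K\}$, and a viscosity subsolution of \eqref{PHE def} in $\Omega_T$ (a maximum of two subsolutions). On $\{0\}\times\overline\Omega$ both pieces are $\le u_0$ while $u_{\mathrm{init}}(0,\cdot)\ge u_0-\epsilon$, so $u_0-\epsilon\le u(0,\cdot)\le u_0$; on $[0,T)\times\partial\Omega$ both pieces are $\le\varphi$ while $U\ge\varphi-\epsilon$, so $\varphi-\epsilon\le u\le\varphi$. Thus $u$ is a continuous $\epsilon$-subbarrier for \eqref{PHE dirichlet} which is Lipschitz in $t$. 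The main obstacle is precisely the bookkeeping hidden in (i)–(iii): one must fix the constants $\delta',L,K,B,A,M$ in a consistent order, each allowed to depend on the earlier ones, and verify case by case — especially near the corner $\{0\}\times\partial\Omega$ — that $u_{\mathrm{init}}$ and every $u_\zeta$ stay $\le u_0$ and $\le\varphi$ on all of $\partial_P\Omega_T$ while $U$ still comes within $\epsilon$ of $\varphi$ along the lateral boundary. The two structural inputs that make this work are the uniform continuity and boundedness of $\varphi$ on the compact $[0,T]\times\partial\Omega$, and the growth hypothesis on $f$, which lets $F(H(\cdot))$ be pushed above any prescribed level by enlarging the coefficient of $\rho$.
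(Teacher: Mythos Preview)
Your argument is correct and follows the same overall strategy as the paper: build the $\epsilon$-subbarrier as the maximum of an ``initial'' piece (a perturbation of $u_0$ by a small multiple of $\rho$ and a large negative linear drift in $t$) and a ``boundary'' piece tuned to $\varphi$. The only real difference is in the boundary piece. Instead of your family $\{u_\zeta\}_{\zeta\in[0,T)\times\partial\Omega}$ and the supremum $U=\sup_\zeta u_\zeta$, the paper simply chooses a single smooth function $\varphi_\epsilon\in C^\infty(\mathbb{R}\times\mathbb{C}^n)$ with $\varphi-\tfrac{\epsilon}{2}\le\varphi_\epsilon\le\varphi$ on $[0,T]\times\partial\Omega$ and takes $\underline u_2=\varphi_\epsilon-\tfrac{\epsilon}{2}+M_2\rho$ for $M_2\gg1$. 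Because $\varphi_\epsilon$ is smooth in all variables, $\partial_t\underline u_2$ is bounded and $H\underline u_2=H\varphi_\epsilon+M_2H\rho$ is computed classically, so the growth condition on $f$ yields the subsolution inequality directly; the constraints $\underline u_2\le\varphi$ on the lateral boundary and $\underline u_2\le u_0$ at $t=0$ follow from $\rho\le0$, $\rho|_{\partial\Omega}=0$, and $M_2$ large, with no case analysis. Your pointwise-barrier construction is a perfectly standard alternative and buys you nothing extra here, while all of the bookkeeping you flag as ``the main obstacle'' --- the ordering of $K,B,A$, the corner analysis at $t=s$, and the uniform-in-$\zeta$ verification of (ii)--(iii) --- evaporates once one replaces the family $\{u_\zeta\}$ by a single smooth extension of $\varphi$.
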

\begin{proof}

	Since $\Omega$ is  strictly $\Gamma$-pseudoconvex, there exist $c>0$ and
 $\rho\in C^2(\bar{\Omega})$ such that $\rho|_{\partial\Omega}=0$,
	$\nabla\rho|_{\partial\Omega}\neq 0$
	and 
	\begin{equation}\label{eq1 subbarrier}
		H\rho(z)-I\in M(\Gamma, n),
	\end{equation}
	 for every $z\in\overline{\Omega}$ . Denote $c=\sup\limits_{\Omega}(-\rho)$. Then, there exists
	$M_1\gg 1$ such that the function
	\begin{center}
		$\underline{u}_1=u_0+\dfrac{\epsilon (\rho-c)}{2c}-M_1t,$
	\end{center}
	is a subsolution for the Cauchy-Dirichlet problem \eqref{PHE dirichlet}.
	
	Let $\varphi_{\epsilon}\in C^{\infty}(\R\times\C^n)$ such that
	\begin{center}
		$\varphi-\dfrac{\epsilon}{2} \leq\varphi_{\epsilon}\leq\varphi,$
	\end{center}
	in $[0, T]\times\partial\Omega$. By \eqref{eq0 subbarrier} and \eqref{eq1 subbarrier},
	 there exists $M_2\gg 1$ such that the function
	\begin{center}
		$\underline{u}_2=\varphi_{\epsilon}-\dfrac{\epsilon}{2}+M_2\rho$,
	\end{center}
	is  a subsolution for the Cauchy-Dirichlet problem \eqref{PHE dirichlet}.
	
	Now, we define $\underline{u}=\max\{\underline{u}_1, \underline{u}_2\}$. Then
	$\underline{u}$ is a continuous $\epsilon$-subbarrier for \eqref{PHE dirichlet}.
\end{proof}
\begin{Prop}\label{prop.superbarrier} 
		Assume $\Omega$ is a bounded smooth domain and $(u_0, g)$ is  
	admissible. Then, for every $\epsilon>0$,
	there exists a continuous $\epsilon$-superbarrier for \eqref{PHE dirichlet}
	which is Lipschitz in $t$. 
\end{Prop}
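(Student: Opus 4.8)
The plan is to construct the $\epsilon$-superbarrier as $\bar v:=\min\{\bar v_1,\bar v_2\}$ of two continuous supersolutions of $F(Hw)=e^{\partial_t w+G(t,z,w)}g(z)$ in $\Omega_T$: $\bar v_1$, built from the admissibility datum plus a linear-in-$t$ term, controls the interior and the slice $\{t=0\}$, while $\bar v_2$, built from a smooth approximation of $\varphi$ plus a large multiple of a suitable defining function, controls the lateral boundary $[0,T)\times\partial\Omega$. Given $\epsilon>0$, I would first invoke admissibility with parameter $\epsilon/4$ to obtain $u_{\epsilon/4}\in C(\overline\Omega)$ and $C_{\epsilon/4}>0$ with $u_0\le u_{\epsilon/4}\le u_0+\epsilon/4$ and $F(Hu_{\epsilon/4})\le e^{C_{\epsilon/4}}g$ in the viscosity sense in $\Omega$; fix a function $\psi$, smooth on a neighbourhood of $[0,T]\times\overline\Omega$, with $|\psi-\varphi|\le\epsilon/8$ on $[0,T]\times\partial\Omega$ and $|\psi(0,\cdot)-u_0|\le\epsilon/8$ on $\overline\Omega$ (obtained by extending $\varphi$ and $u_0$ to a single continuous function on $([0,T]\times\partial\Omega)\cup(\{0\}\times\overline\Omega)$ — possible since $\varphi(0,\cdot)=u_0|_{\partial\Omega}$ — then Tietze-extending and mollifying), and call $L$ its Lipschitz constant in $t$; and, using only that $\Omega$ is smooth, fix $\rho\in C^\infty(\overline\Omega)$ with $\rho|_{\partial\Omega}=0$, $\rho<0$ in $\Omega$, $\nabla\rho\neq0$ on $\partial\Omega$, and — the one extra property needed — such that the complex Hessian $H\rho$ has positive trace on a neighbourhood $V$ of $\partial\Omega$ (e.g. $\rho=\rho_0e^{N\rho_0}$ with $\rho_0$ any smooth defining function and $N$ large, the factor forcing $H\rho$ to have positive trace near $\partial\Omega$ because $\partial\rho_0\neq0$ there). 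With $G_0:=\min_{[0,T]\times\overline\Omega}G(\cdot,\cdot,\min_{\overline\Omega}u_0)$, I then set $\bar v_1:=u_{\epsilon/4}+\tfrac\epsilon4+Mt$ and $\bar v_2:=\psi+\tfrac\epsilon2-\Lambda\rho$, with $M\ge\max\{L,\,C_{\epsilon/4}-G_0\}$ and $\Lambda$ large (chosen last).

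Next I would verify three supersolution facts. First, $\bar v_1$ is a supersolution in $\Omega_T$: a lower test function $q$ at $(t_0,z_0)\in\Omega_T$ yields, after subtracting the $z$-constant $\tfrac\epsilon4+Mt_0$, a lower test function of $u_{\epsilon/4}$ at $z_0\in\Omega$, so $F(Hq(t_0,z_0))\le e^{C_{\epsilon/4}}g(z_0)$; moreover $t\mapsto q(t,z_0)-Mt$ has a local maximum at $t_0$, so $\partial_t q(t_0,z_0)=M$; and $q(t_0,z_0)=\bar v_1(t_0,z_0)\ge\min_{\overline\Omega}u_0$, whence $G(t_0,z_0,q(t_0,z_0))\ge G_0$ by monotonicity of $G$; since $M\ge C_{\epsilon/4}-G_0$ this gives $F(Hq(t_0,z_0))\le e^{M+G(t_0,z_0,q(t_0,z_0))}g(z_0)$. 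Second, $\bar v_2$ is $C^\infty$, and over $V$ the Hermitian matrix $H\bar v_2=H\psi-\Lambda H\rho$ has negative trace once $\Lambda$ is large (its trace is bounded by $\sup|{\rm trace}(H\psi)|-\Lambda\inf_V{\rm trace}(H\rho)<0$), so its eigenvalues lie outside $\overline{\Gamma_1}\supseteq\overline\Gamma$, hence outside $\overline{M(\Gamma,n)}$, and $F(H\bar v_2)=-\infty$ there; any lower test function $q$ of $\bar v_2$ at a point over $V$ satisfies $Hq\le H\bar v_2$ and hence also has negative trace, so $F(Hq)=-\infty$ and the supersolution inequality holds trivially over $V$. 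Third, since $\bar v_1$ is bounded while $-\Lambda\rho$ is large off $\partial\Omega$, for $\Lambda$ large one has $\bar v_2>\bar v_1$ outside a neighbourhood of $\partial\Omega$ contained in $V$; therefore at a point where $\bar v_1\le\bar v_2$ a lower test function of $\bar v$ is a lower test function of $\bar v_1$, and at a point where $\bar v_2<\bar v_1$ — necessarily over $V$ — it is a lower test function of $\bar v_2$, so $\bar v$ is a supersolution in $\Omega_T$ (the $\min$-analogue of part 2 of Lemma \ref{lem inf sup}, which one may also apply directly once $\bar v_2$ is known to be a supersolution wherever $\bar v_2\le\bar v_1$).

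It then remains to check the parabolic boundary and regularity. On $[0,T)\times\partial\Omega$, $\rho=0$ gives $\bar v_2=\psi+\tfrac\epsilon2\in[\varphi+\tfrac{3\epsilon}8,\varphi+\tfrac{5\epsilon}8]$, while $\varphi(t,z)\le\psi(t,z)+\tfrac\epsilon8\le\psi(0,z)+Lt+\tfrac\epsilon8\le u_0(z)+\tfrac\epsilon4+Lt\le u_{\epsilon/4}(z)+\tfrac\epsilon4+Mt=\bar v_1(t,z)$, using $M\ge L$; hence $\varphi\le\bar v\le\varphi+\epsilon$ there. At $t=0$, $\bar v_1(0,\cdot)=u_{\epsilon/4}+\tfrac\epsilon4\in[u_0+\tfrac\epsilon4,u_0+\tfrac\epsilon2]$ and $\bar v_2(0,\cdot)=\psi(0,\cdot)+\tfrac\epsilon2-\Lambda\rho\ge u_0+\tfrac{3\epsilon}8$ (since $-\Lambda\rho\ge0$), so $u_0\le\bar v(0,\cdot)\le u_0+\epsilon$; and $\bar v$ is continuous and $\max\{M,L\}$-Lipschitz in $t$, so $\bar v_*=\bar v=\bar v^*$. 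Thus $\bar v$ is the required $\epsilon$-superbarrier. The main obstacle — and the reason the construction is not merely a time-affine modification of $u_{\epsilon/4}$ as in Proposition \ref{prop.subbarrier} — is the corner $\{0\}\times\partial\Omega$ together with the fact that $\varphi$ is only continuous (not Lipschitz) in $t$: no function of the form $u_{\epsilon/4}(z)+Mt$ can stay above $\varphi$ for small $t>0$, which forces the separate boundary term $\bar v_2$ built from the smoothing $\psi$ and a defining function with positive-trace complex Hessian near $\partial\Omega$. Note that, in contrast with the subbarrier, neither pseudoconvexity of $\Omega$ nor a growth hypothesis on $f$ is used, since for a supersolution it is an advantage that the test Hessians leave the cone $\Gamma$.
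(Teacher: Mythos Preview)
Your proof is correct and shares the same overall architecture as the paper's: write the superbarrier as $\min\{\bar v_1,\bar v_2\}$, where $\bar v_1=u_{\epsilon/4}+\tfrac{\epsilon}{4}+Mt$ controls the initial slice and $\bar v_2$ controls the lateral boundary. The genuine difference is in the construction of $\bar v_2$. The paper takes $\bar v_2(t,\cdot)$ to be the harmonic extension of a smooth approximation $\varphi^{\epsilon}$ of $\varphi$; since $\mathrm{trace}\,H\bar v_2=0$ and $\Gamma\subseteq\Gamma_1$, one has $\lambda(H\bar v_2)\in\partial\Gamma$ (or outside $\overline\Gamma$), so $F(H\bar v_2)\le 0$ everywhere in $\Omega_T$, making $\bar v_2$ a global supersolution and letting one apply Lemma~\ref{lem inf sup} directly to the pair. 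Your route avoids the Dirichlet problem for the Laplacian: you take $\bar v_2=\psi+\tfrac{\epsilon}{2}-\Lambda\rho$ with $\rho$ a defining function engineered to have $\mathrm{trace}\,H\rho>0$ near $\partial\Omega$, so that $F(H\bar v_2)=-\infty$ only on a collar $V$, and then use a localization argument (for large $\Lambda$, the region $\{\bar v_2<\bar v_1\}$ sits inside $V$) to conclude that the minimum is a supersolution. What the paper's approach buys is a cleaner global supersolution with no case splitting; what yours buys is a more self-contained construction that uses nothing beyond a smooth defining function and the inclusion $\Gamma\subseteq\Gamma_1$, making explicit why no pseudoconvexity-type hypothesis is needed for the superbarrier. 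Both treatments handle the Lipschitz-in-$t$ requirement and the corner $\{0\}\times\partial\Omega$ in the same way, via a smooth (hence time-Lipschitz) approximation of the boundary/initial data.
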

Here, we recall that the pair $(u_0, g)$ is admissible if for all $\epsilon>0$, there
exist $u_{\epsilon}\in C(\bar{\Omega})$ and $C_{\epsilon}>0$ such that $u_0\leq u_{\epsilon}\leq u_0+\epsilon$ and $F( Hu_{\epsilon})\leq 
e^{C_{\epsilon}}g(z)$ in the viscosity sense. 
\begin{proof}
	Since $(u_0(z), \mu(0, z))$ is  admissible, there	exist $u_{\epsilon}\in C(\bar{\Omega})$ and $C_{\epsilon}>0$ such that $u_0+\epsilon\leq u_{\epsilon}\leq u_0+2\epsilon$ and 
	$F(H u_{\epsilon})\leq e^{C_{\epsilon}}g(z)$ in the viscosity sense.
		Let $\varphi^{\epsilon}\in C^{\infty}(\R\times\C^n)$ such that
	\begin{center}
		$\varphi\leq\varphi^{\epsilon}\leq\varphi+\epsilon,$
	\end{center}
	in $[0, T]\times\partial\Omega$. 
	Denote
	$$M_1=\sup\{|G(t, z, u_0(z))|: (t, z)\in (0, T)\times\Omega \}+\sup_{\Omega_T}|\dt\varphi_{\epsilon}|.$$
	By using the definition,
	we have $\overline{u}_1(t, z):=u_{\epsilon}(z)+(C_{\epsilon}+M_1)t$ is a viscosity supersolution for the Cauchy-Dirichlet problem \eqref{PHE dirichlet}.
	
	For every $t\in [0, T]$, we denote by $\overline{u}_2(t, z)$ the unique solution to the equation
	\begin{equation}
		\begin{cases}
			\overline{u}_2\in C^{\infty}((0, T)\times \overline{\Omega}),\\
			\Delta\overline{u}_2(t, z)=0,\\
			\overline{u}_2(t, z)|_{\partial\Omega}=\varphi^{\epsilon}(t, z)|_{\partial\Omega}.
		\end{cases}
	\end{equation}
	
	Then $\overline{u}_2: [0, T]\times\overline{\Omega}\rightarrow\R$ is Lipschitz in $t$. In particular,
	$\overline{u}_2\in C([0, T]\times\overline{\Omega})$. It is easy to see that $\overline{u}_2$ is also
	a supersolution to \eqref{PHE dirichlet}.
	
	Now, we define $\overline{u}=\min\{\overline{u}_1, \overline{u}_2\}$. It is clear that
	$\overline{u}$ is a continuous $2\epsilon$-superbarrier for \eqref{PHE dirichlet}.
\end{proof}
\subsection{The existence and uniqueness of solution}
\begin{The}
	Assume $\Omega$ is a strictly $\Gamma$-pseudoconvex domain and $f$ satisfies
	\begin{center}
		$\lim\limits_{R\to\infty}f(R,...,R)=\infty.$
	\end{center}
	Then, the Cauchy-Dirichlet problem \eqref{PHE dirichlet} admits a unique viscosity solution iff $(u_0, g)$ is admissible.
\end{The}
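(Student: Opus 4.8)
The proof splits into the sufficiency (\emph{if}) and the necessity (\emph{only if}) of admissibility, followed by a short uniqueness argument.

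\textbf{Sufficiency and uniqueness.} Suppose $(u_0,g)$ is admissible. Since $\Omega$ is strictly $\Gamma$-pseudoconvex and $f$ satisfies the growth condition, Proposition \ref{prop.subbarrier} gives, for each $\epsilon>0$, a continuous $\epsilon$-subbarrier for \eqref{PHE dirichlet} that is Lipschitz in $t$; since $\Omega$ is a bounded smooth domain and $(u_0,g)$ is admissible, Proposition \ref{prop.superbarrier} gives a continuous $\epsilon$-superbarrier, Lipschitz in $t$. Lemma \ref{lem perron} then applies and produces $\Phi_S=\sup\{v:v\in S\}$, with $(\Phi_S)^*$ a subsolution and $(\Phi_S)_*$ a supersolution of \eqref{PHE dirichlet}. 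Both are bounded: every $v\in S$ lies below any $\epsilon$-superbarrier (by Theorem \ref{the compa}) and above the $\epsilon$-subbarrier (which itself belongs to $S$), so $\Phi_S$ is squeezed between two bounded functions; letting $\epsilon\to0$ in this squeezing shows moreover that $\Phi_S$ is continuous on $\partial_P\Omega_T$ and equals the prescribed data there, so $(\Phi_S)^*=(\Phi_S)_*$ on $\partial_P\Omega_T$. Applying Theorem \ref{the compa} to the pair $(\,(\Phi_S)^*,(\Phi_S)_*\,)$ gives
\[
\sup_{\Omega_T}\big((\Phi_S)^*-(\Phi_S)_*\big)\le\sup_{\partial_P\Omega_T}\big((\Phi_S)^*-(\Phi_S)_*\big)_+=0 ,
\]
and since always $(\Phi_S)_*\le(\Phi_S)^*$ we conclude that $\Phi_S\in C([0,T)\times\overline\Omega)$ is a viscosity solution of \eqref{PHE dirichlet}. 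For uniqueness, if $u_1,u_2$ are both solutions then they coincide on $\partial_P\Omega_T$, so Theorem \ref{the compa} applied to $(u_1,u_2)$ and to $(u_2,u_1)$ forces $u_1\equiv u_2$.

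\textbf{Necessity.} Conversely, assume \eqref{PHE dirichlet} has a viscosity solution $u$. Being simultaneously a sub- and a supersolution of the Cauchy--Dirichlet problem, $u\in C([0,T)\times\overline\Omega)$, $u(0,\cdot)=u_0$, and $u$ is bounded on $[0,T/2]\times\overline\Omega$; set $B=\sup_{[0,T/2]\times\overline\Omega}|u|$ and let $M$ bound $|G|$ on $[0,T/2]\times\overline\Omega\times[-B,B]$. Fix $A>osc_{(0,T/2)\times\Omega}u$, let $\omega$ be a modulus of continuity of $u$ in $t$, uniform in $z\in\overline\Omega$, and for $k\gg1$ form the time inf-convolution $u_k$ of Lemma \ref{lem regularization super} on $(A/k,\,T/2-A/k)\times\Omega$. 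By part (iii) of that lemma $u_k$ is $k$-Lipschitz in $t$, so any lower test function $q$ of $u_k$ at a point $(t_0,z_0)$ has $\partial_t q(t_0,z_0)\le k$; combining this with part (iv) gives $F(Hq(t_0,z))|_{z=z_0}\le e^{k+M}g(z_0)$. Thus $u_k$ is a viscosity supersolution of the time-independent equation $F(Hw)=e^{k+M}g(z)$, and Proposition \ref{lem:PartialSol} (after an obvious translation in time) shows that for each admissible $t_0$ the slice $z\mapsto u_k(t_0,z)$ is a supersolution of $F(H\psi)=e^{k+M}g$ in $\Omega$. Taking $t_0=2A/k$ and using part (ii) of Lemma \ref{lem regularization super} together with $u(0,\cdot)=u_0$ and the modulus $\omega$, one gets $u_0(z)-\omega(3A/k)\le u_k(2A/k,z)\le u_0(z)+\omega(2A/k)$. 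Hence $u_\epsilon(z):=u_k(2A/k,z)+\omega(3A/k)$ lies in $C(\overline\Omega)$ (as $u$ is continuous), satisfies $u_0\le u_\epsilon\le u_0+\epsilon$ once $k$ is large enough that $2\omega(3A/k)<\epsilon$, and still satisfies $F(Hu_\epsilon)\le e^{C_\epsilon}g$ in the viscosity sense with $C_\epsilon=k+M$. This is exactly the admissibility of $(u_0,g)$.

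\textbf{Main obstacle.} The sub/super-barrier constructions, the Perron lemma and the comparison principle are already available, so the sufficiency direction is essentially assembly. The genuinely new ingredient is the necessity direction, and the delicate point there is the observation that the time-regularized solution $u_k$, being Lipschitz in $t$, is a supersolution of an honest elliptic Hessian equation whose right-hand side $e^{k+M}g$ is controlled purely by the time-Lipschitz constant and the bound on $G$; this is what permits restricting to a time slice $t_0=2A/k$ close to $0$ and extracting the elliptic subbarrier for $u_0$ that admissibility demands. One must also keep track of the moduli of continuity so that the chosen slice stays within $\epsilon$ of $u_0$ from both sides.
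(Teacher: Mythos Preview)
Your proof is correct and follows essentially the same route as the paper's: for sufficiency you assemble the subbarrier, superbarrier, Perron lemma and comparison principle exactly as the paper does, and for necessity you use parts (iii) and (iv) of Lemma \ref{lem regularization super} to turn the time-regularized solution $u_k$ into a supersolution of an elliptic Hessian inequality with right-hand side $e^{k+M}g$, then restrict to a time slice via Proposition \ref{lem:PartialSol} and compare with $u_0$ by continuity. The only cosmetic differences are that you pick the explicit slice $t_0=2A/k$ and track a modulus of continuity $\omega$, whereas the paper simply invokes continuity to find suitable $\delta$ and $k_0$; the content is the same.
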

\begin{proof}
	Assume $(u_0, g)$ is admissible. By Proposition \ref{prop.subbarrier}, Proposition
	\ref{prop.superbarrier} and Lemma \ref{lem perron}, we have
	 $\Phi_S=\sup\{v: v\in S\}$ is a discontinuous viscosity solution of
	\eqref{PHE dirichlet}, where $S$ is the family of all continuous subsolutions for \eqref{PHE dirichlet}. It is obvious that $(\Phi_S)^*\geq(\Phi_S)_*$. Moreover, 
	it follows from \ref{the compa} that $(\Phi_S)^*\leq(\Phi_S)_*$. Hence
	 $(\Phi_S)^*=(\Phi_S)_*$ and $\Phi_S$ is a viscosity solution of \eqref{PHE dirichlet}.
	 By Theorem \ref{the compa}, we also obtain the uniqueness of solution.
	 
	 For the converse we assume \eqref{PHE def} has a viscosity solution $u$.
	 By the parts (iii) and (iv) of Lemma \ref{lem regularization super}, we have
	 \begin{center}
	 	$F(Hu_k)\leq e^{k+C}g(z)$,
	 \end{center}
 in the viscosity sense in $(A/k, T-A/k)\times\Omega$ for every $k\gg 1$,
where $A>2osc_{\Omega_T}u$ is a constant, $C=\sup\{G(t, z, \sup u): (t, z)\in\Omega_T \}<\infty$ and
$$u_k(t, z)=\inf\{u(t+s, z)+k\left| s\right| : \left| s\right|  \leq \dfrac{A}{k}\}.$$
By Proposition \ref{lem:PartialSol}, we have
$$F(Hu_k(t_0, z))\leq e^{k+C}g(z),$$
in the viscosity sense in $\Omega$ for every $k\gg 1$ and for each $t_0\in (A/k, T-A/k)$.

For every $\epsilon>0$, by the part (ii) of Lemma \ref{lem regularization super} and by the continuity of $u$, there exist $0<\delta<T$ and $k_0\gg 1$ such that $|u(\delta, z)-u_0(z)|<\epsilon/4$ and
 $|u_{k_0}(\delta, z)-u(\delta, z)|<\epsilon/4$. Hence, denoting $u_{\epsilon}(z)=u_{k_0}(\delta, z)+\epsilon/2$, we have $u_0 \leq u_\epsilon \leq u_0 +\epsilon$ và $F(Hu_\epsilon) \leq e^{k_0+C}g(z)$ in the viscosity sense in $\Omega$. Thus $(u_0, g)$ is admissible.
 
 The proof is completed.
\end{proof}

\end{document}